\numberwithin{equation}{section}
\newtheorem{thm}{Theorem}[section]
\newtheorem{lem}[thm]{Lemma}
\newtheorem{prop}[thm]{Proposition}
\newtheorem{Def}[thm]{Definition}
\newtheorem{Rem}[thm]{Remark}
\newcommand{\R}{\mathbb{R}}
\def\lam {\lambda}
\def\sp {\quad}
\begin{document}
	\title[ Singular solutions ]{Symmetry of positive solutions to biharmonic Lane-Emden equation with singular set}
	
	\author[X.\ Huang]{Xia Huang}
	\author[Y.\ Li]{ Yuan Li}
	\author[X.\ Zhou]{Xianmei Zhou}

	\address{Xia Huang \newline\indent School of Mathematical Sciences and Shanghai Key Laboratory of PMMP, East China Normal University, \newline\indent
		Shanghai, 200241, People's Republic of China}
	\email{ xhuang@cpde.ecnu.edu.cn}

	\address{Yuan Li \newline\indent School of Mathematical Sciences and Shanghai Key Laboratory of PMMP, East China Normal University, \newline\indent
	Shanghai, 200241, People's Republic of China}
\email{ yli@math.ecnu.edu.cn}

	\address{ Xianmei Zhou \newline\indent School of Mathematical Sciences and Shanghai Key Laboratory of PMMP, East China Normal University, \newline\indent
		Shanghai, 200241, People's Republic of China}
	\email{  xmzhoumath@163.com}
	
	\subjclass[2010]{35J15, 45E10, 45G05}
	\keywords{Biharmonic Lane-Emden Equation; Weak/Punctured/Distributional Solutions; Singular Set; Symmetry and Monotonity}

	\begin{abstract}
		In this paper, we are devoted to studying the positive weak, punctured or distributional solutions to the biharmonic Lane-Emden equation
		\begin{equation*}
			\Delta^{2} u=u^{p}
			\quad
			\quad \text{in} \ \mathbb{R}^{N}\setminus  Z,	
		\end{equation*}
	  where $N\geq5$, $1<p\leq\frac{N+4}{N-4}$, and the singular set $Z$ represents a closed and proper subset of $ \left\lbrace x_{1}=0\right\rbrace $. The symmetry and monotonicity properties of the singular solutions will be given by taking advantage of the moving plane method and the approach of moving spheres.
		
	\end{abstract}
	\maketitle
	%
	
	\section{Introduction}
	In this paper, we focus on investigating the symmetry and monotonicity properties of the positive solutions to the following biharmonic Lane-Emden equation with singular set $Z$
	\begin{equation}\label{main3}
		\Delta^{2} u=u^{p}
		\quad
		\quad \text{in} \ \mathbb{R}^{N}\setminus  Z,	
	\end{equation}
	where $N\geq5$, $1<p\leq\frac{N+4}{N-4}$, $ Z$ is a closed and proper subset of the hyperplane $ \left\lbrace x_{1}=0\right\rbrace $.
In recent years, equation \eqref{main3} has been  extensively studied  in the huge of literatures, since they can be seen from  many different geometric and physical problems.

\medskip

To the best of our knowledge, various types of solutions to \eqref{main3} have been proposed and studied, including punctured solutions, weak solutions, distributional solutions and more. For the reader's convenience, let us precise the notion of solutions involve here. First, the function $u$ is called a {\bf punctured solution} to \eqref{main3} if $u\in C^{4}(\R^{N}\setminus  Z)$ and satisfies \eqref{main3} pointwisely  in the
classical sense. Secondly, $u$ is said a {\bf weak solution} to \eqref{main3} if $u\in H^{2}_{loc}(\R^{N}\setminus  Z)$, and \eqref{main3} is verified in the weak sense, that is,
 	\begin{equation}\label{u0}
	\int_{\R^N}\Delta u \Delta\varphi dx=	\int_{\R^N}u^{p} \varphi dx, \quad \forall\varphi\in C^{2}_{c}(\mathbb{R}^{N}\setminus  Z).
	\end{equation}	
 At last, we call $u$ a {\bf distributional solution} to $\eqref{main3}$ if $u^{p}\in L^{1}_{loc}(\R^N) $ and verifies \eqref{main3} in the sense of distributions, that is,
	\begin{equation*}
	\int_{\R^N} u \Delta^{2}\varphi dx=	\int_{\R^N}u^{p} \varphi dx, \quad \forall\varphi\in C^{\infty}_{c}(\mathbb{R}^{N}).
\end{equation*}	

\medskip

Now, we briefly review some of the literature on positive punctured and weak solutions to the  corresponding second order elliptic equation
\begin{equation}\label{le}
	-\Delta u=u^{p}
	\quad
	\quad \text{in} \ \mathbb{R}^{N}\setminus  Z,	
\end{equation}
where  $N\geq3$, $1<p\leq\frac{N+2}{N-2}$ and the singular set $Z$ is as above.
When $Z$ consists of only one single point, without loss of generality, setting the origin ${\{0}\}$, \eqref{le} is reduced to
\begin{equation}\label{le0}
	-\Delta u=u^{p}
	\quad
	\quad \text{in} \ \mathbb{R}^{N}\setminus  \left\lbrace 0\right\rbrace.
\end{equation}
If $x=0$ is a removable singularity, it is acknowledged that, Gidas and Spruck \cite{GS} demonstrated that \eqref{le0} has no positive  classical solutions in $ C^{2}(\R^N )$ for $1<p<\frac{N+2}{N-2}$. For the critical case  $p=\frac{N+2}{N-2}$, it's related to Yamabe equation,
 Gidas, Ni and Nirenberg \cite{GNN}(under the assumption on $u=O(|x|^{2-N})$ as $|x|\to+\infty$), as well as Caffarelli, Gidas and Spruck \cite{CGS}, respectively proved that any positive classical solution of \eqref{le0} has the form
\begin{equation*}
	u(x)=c\left( \frac{a}{1+a^{2}|x-x_{0}|^2}\right)^{\frac{N-2}{2}},\quad \text{for some}~ c,a>0, ~~ x_{0}\in\R^N .
\end{equation*}
 Later, Chen and Li \cite{CL} simplified these classification results by using the moving plane method, while Li \cite{Li} utilized the moving sphere method. On the other hand, if $x=0$ is a non-removable singularity point, i.e. $\lim\limits_{|x|\to 0}u(x)=+\infty$, then the positive solution to \eqref{le0} is radial
symmetry with respect to the origin with $1<p\leq\frac{N+2}{N-2}$, see \cite{CGS,CL} for punctured solution $u\in C^{2}(\R^N\setminus\left\lbrace 0\right\rbrace )$, and \cite{TS} for weak solution $u\in H^{1}_{loc}(\R^N\setminus\left\lbrace 0\right\rbrace )$. In particular, for $p=\frac{N+2}{N-2}$, under the Emden-Fowler transform \begin{equation*}
	v(t)=|x|^{\frac{N-2}{2}}u(x), \quad t=\log|x|,
\end{equation*}
Caffarelli, Gidas and Spruck \cite{CGS} also classified all those radial solutions of \eqref{le0} with non-removable singularity ${\{0}\}$ by ODE analysis, which are usually called the Fowler solutions or Delaunay type solutions.
Moreover, the above Delaunay type solutions play an important role in study of the singular Yamabe problem, the prescribed scalar curvature problem on Riemannian manifolds, and in obtaining a priori estimates in nonlinear equations. For the general case, if $Z$ is a closed and proper subset of the hyperplane $\left\lbrace x_{1}=0\right\rbrace $ satisfying ${ {\mathop{Cap_{2}}\limits_{\R^N}}( Z)}=0$, then the positive weak solution $u\in H^{1}_{loc}(\R^N\setminus Z )$ to \eqref{le} is symmetric with respect to the hyperplane $\left\lbrace x_1=0\right\rbrace $, see \cite{SB,EFS, CLin}, where Newton capacity defined by $${{\mathop{Cap_{2}}\limits_{\R^N}}( Z)}:=\inf\left\lbrace \int_{\R^N}|\nabla\varphi|^{2}dx: \varphi\in C^{\infty}_{c}(\R^N), \varphi\geq1\ \text{in a neighborhood of} \  Z \right\rbrace.$$

\medskip

For fourth order equation \eqref{main3},  when $Z=\left\lbrace 0\right\rbrace $, equation \eqref{main3} turns to
\begin{equation}\label{le2}
	\Delta^{2} u=u^{p}
	\quad
	\quad \text{in} \ \mathbb{R}^{N}\setminus  \left\lbrace 0\right\rbrace.
\end{equation}
 If $x=0$ is a removable singularity point, Lin\cite{Lin} proved that the unique nonnegative classical solution to \eqref{le2} in $C^{4}(\R^N)$ is trivial, i.e. $u\equiv 0$ for $1<p<\frac{N+4}{N-4}$ and any positive classical solutions of \eqref{le2} with $p=\frac{N+4}{N-4}$ has the form
\begin{equation*}
	u(x)=c\left( \frac{a}{1+a^{2}|x-x_{0}|^2}\right)^{\frac{N-4}{2}},\quad \text{for some}~ c,a>0, ~~ x_{0}\in\R^N .
\end{equation*}
 On the other hand, when $Z=\left\lbrace 0\right\rbrace $ is an non-removable singularity and $1<p\leq\frac{N+4}{N-4}$, Lin \cite{Lin} also showed that the positive punctured solution to \eqref{le2} is radially symmetric with respect to the origin via the method of moving planes. Then Guo, Huang, Wang and Wei\cite{GHWW}, Frank and K\"onig\cite{FK} classified the singular radial solution \eqref{le2} with critical exponent $p=\frac{N+4}{N-4}$,
by using the Emden-Flower transform
\begin{equation*}
	v(t)=|x|^{\frac{N-4}{2}}u(x), \quad t=\log|x|.
\end{equation*}
When $Z=\R^k$ is a lower dimensional hyperplane with $0\leq k\leq\frac{N-2m}{2}$, Du and Yang \cite{DY} showed the symmetry of the positive punctured solutions of the following conformal invariant biharmonic elliptic problems
\begin{equation}\label{main}
	\Delta^{2} u=u^{\frac{N+4}{N-4}}
	\quad
	\quad \text{in} \ \mathbb{R}^{N}\setminus Z.
\end{equation}

\medskip

An interesting question is whether the symmetry results of the positive punctured or weak solutions are perfectly valid for the fourth order problems if the singular set $Z$ is more general. Similar to the definition of capacity in \cite{EG}, we introduce the following definition of biharmonic capacity.
\begin{Def}
For each compact subset $ Z$ of $\Omega$, we define
 \begin{equation}\label{defcap}
	{\rm\mathop{Cap_{\Delta}}_{\Omega}( Z)}:=\inf\left\lbrace \int_{\Omega}|\Delta\varphi|^{2}dx: \varphi\in C^{\infty}_{c}(\Omega), \varphi\geq1\ \text{in a neighborhood of} \  Z \right\rbrace.
\end{equation}	
\end{Def}
Naturally, an important question arises: do the positive weak or punctured solutions of \eqref{main3} exhibit symmetry property under the condition ${\rm \mathop{Cap_{\Delta}}\limits_{\R^N}(Z)}=0 $?

Our main results are stated as follows.
\begin{thm}\label{symm2}
Let $N\geq5$ and $u\in H^{2}_{loc}(\R^{N}\setminus Z)$ be a positive weak solution to \eqref{main}. Assume that $u$ has non-removable  singularity in the singular set $Z$. If $Z$ is a closed and proper subset of the hyperplane $\left\lbrace x_{1}=0\right\rbrace $ satisfying
\begin{equation}\label{capz0}
\mathop{Cap_{\Delta}}_{\R^N}(\mathcal{K}_{z_0}(Z))=0,\quad\text{for some}\ z_0\notin Z,
\end{equation}
where $\mathcal{K}_{z_{0}}(Z)=\left\lbrace z_{0}+\frac{x-z_0}{|x-z_0|^{2}},\ x\in Z\right\rbrace $,
then $u$ is symmetric with respect to the  hyperplane $ \left\lbrace x_{1}=0\right\rbrace $ and increasing in the $x_{1}$-direction in $\left\lbrace x_{1}<0 \right\rbrace$. Furthermore, if $Z$ is a compact subset verifying \begin{equation}\label{capacity0}
	\mathop{Cap_{\Delta}}_{\R^N}( Z)=0,
\end{equation}
then the same conclusions hold for $u$.
\end{thm}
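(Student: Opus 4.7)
The overall strategy is to apply the moving plane method to $u$ in the $x_1$-direction. The critical ingredient is the use of the zero biharmonic capacity condition to upgrade the weak identity to a global distributional identity and to obtain a Riesz-type integral representation of $u$, which drives the moving plane step. The role of the Kelvin transform in the general case is to reduce a non-compact capacity hypothesis to a compact one.

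\textbf{Compact case.} Assume first that $Z$ is compact and $\mathop{Cap_{\Delta}}_{\R^N}(Z)=0$. The capacity hypothesis yields a cut-off sequence $\eta_k\in C^\infty_c(\R^N)$, $\eta_k\equiv 1$ near $Z$, $0\le\eta_k\le 1$, with $\|\Delta\eta_k\|_{L^2(\R^N)}\to 0$. Testing the weak identity \eqref{u0} against $(1-\eta_k)\varphi$ with $\varphi\in C^\infty_c(\R^N)$ and letting $k\to\infty$ promotes \eqref{u0} to a distributional identity $\Delta^2 u=u^p$ on all of $\R^N$, with no singular mass concentrated on $Z$. Writing the equation as the system $-\Delta u=v$, $-\Delta v=u^p$ gives $v\ge 0$, and combining decay at infinity (derived from $u^p\in L^1$ and elliptic estimates) with Riesz potential theory yields
$$u(x) = c_N \int_{\R^N} \frac{u^p(y)}{|x-y|^{N-4}}\,dy.$$
With this in hand the moving plane method runs classically. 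For $\lambda<0$, setting $\Sigma_\lambda=\{x_1<\lambda\}$, $x^\lambda=(2\lambda-x_1,x')$, and $u_\lambda(x)=u(x^\lambda)$, and noting that $Z\cap\overline{\Sigma_\lambda}=\emptyset$, one splits the representation over $\Sigma_\lambda$ and its reflection to obtain
$$u_\lambda(x)-u(x)=c_N\!\int_{\Sigma_\lambda}\!K_\lambda(x,y)\bigl(u_\lambda^p(y)-u^p(y)\bigr)\,dy,\ \ K_\lambda(x,y)=\frac{1}{|x-y|^{N-4}}-\frac{1}{|x-y^\lambda|^{N-4}}>0.$$
A standard start-and-move argument (initiate at $\lambda\ll 0$ using the decay of $u$, then slide $\lambda\uparrow 0$) forces $u_\lambda\ge u$ on $\Sigma_\lambda$ for every $\lambda<0$; passing to the limit at $\lambda=0$ yields symmetry across $\{x_1=0\}$ and strict monotonicity in $x_1$ on $\{x_1<0\}$.

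\textbf{General case via Kelvin transform.} For the first assertion, let $z_0\notin Z$ satisfy \eqref{capz0} and consider $\tilde u(y)=|y-z_0|^{4-N}\,u(z_0+(y-z_0)/|y-z_0|^2)$. Conformal invariance of the critical equation makes $\tilde u$ a positive weak solution of $\Delta^2\tilde u=\tilde u^p$ on $\R^N\setminus\tilde Z$, where $\tilde Z=\mathcal K_{z_0}(Z)\cup\{z_0\}$ is compact (since $Z$ is closed and $z_0\notin Z$) and of vanishing biharmonic capacity (single points carry zero biharmonic capacity for $N\ge 5$, and capacity is subadditive). The compact-case removability step then applies to $\tilde u$, and transferring back through the Kelvin transform shows that $u$ itself admits the same global Riesz integral representation. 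The positive-kernel inequality above is therefore available for $u$ on $\Sigma_\lambda$ for every $\lambda<0$ (using $Z\subset\{x_1=0\}$, so $Z$ is disjoint from $\Sigma_\lambda$ regardless of compactness), and the start-and-move procedure concludes as before.

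\textbf{Main obstacle.} The principal technical difficulty is the \emph{removability/promotion} step: using only $u\in H^2_{loc}(\R^N\setminus Z)$ together with the zero biharmonic capacity, one must promote the weak identity to a distributional identity on all of $\R^N$ and simultaneously exclude any singular measure of $\Delta^2 u$ supported on $Z$. This is precisely where the \emph{biharmonic} capacity, rather than the Newton capacity, is the natural tool: two successive integrations by parts produce boundary contributions involving $\Delta\eta_k$, and only the condition $\|\Delta\eta_k\|_{L^2}\to 0$ is strong enough to kill them. Once this is achieved, the Riesz representation and the positive-kernel inequality underpinning the moving plane argument follow along standard lines.
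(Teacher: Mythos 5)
Your plan shares with the paper only the reduction via the Kelvin transform to a compact singular set; the engine you propose (promote \eqref{u0} to a distributional identity on $\R^N$, deduce $-\Delta u\ge 0$, obtain the Riesz representation $u=c_N\int u^p(y)|x-y|^{4-N}dy$, then move planes on the integral equation) is not the paper's argument for weak solutions, and it has concrete gaps. First, the assertion that writing $-\Delta u=v$, $-\Delta v=u^p$ ``gives $v\ge 0$'' is unjustified: superharmonicity of positive solutions is a nontrivial theorem even when $Z=\{0\}$, and the paper explicitly stresses that it does \emph{not} assume $-\Delta u\ge 0$. Second, your promotion step needs information the hypotheses do not provide: testing \eqref{u0} against $(1-\eta_k)\varphi$ produces terms such as $\int \Delta u\,(\varphi\Delta\eta_k+2\nabla\eta_k\cdot\nabla\varphi)\,dx$ and $\int u^{p}\eta_k\varphi\,dx$, whose control requires $\Delta u\in L^{2}$ and $u^{p}\in L^{1}$ in a full neighborhood of $Z$; the assumption $u\in H^{2}_{loc}(\R^{N}\setminus Z)$ gives neither, and zero biharmonic capacity constrains the test functions, not the solution. (In the paper, integrability of $u^{p}$ across the singular set is obtained only in Section 3, in Proposition \ref{distribu}, for $C^{4}$ punctured solutions under the Minkowski-dimension hypothesis, which the authors point out is strictly stronger than null capacity.) Third, even granting a distributional equation, the Riesz representation requires $u^{p}\in L^{1}$ together with a Liouville theorem for the biharmonic difference, which needs a growth bound on $u$ that Theorem \ref{symm2} does not assume; and your initiation of the moving plane ``using the decay of $u$'' presupposes decay that is never established — in your compact case you do not even pass through the Kelvin transform, so no decay or global integrability of $u$ is available at all.

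The paper proceeds quite differently: Theorem \ref{symm} runs the moving plane method directly at the level of the weak formulation, comparing $u$ with $u_\lambda$ through the Green function of the half-space and the Hardy--Littlewood--Sobolev inequality, with cut-offs $\eta_\varepsilon$ manufactured from the capacity condition (Lemma \ref{lem1}), so no representation formula, no superharmonicity and no decay of $u$ are ever used. The Kelvin transform then serves two purposes in proving Theorem \ref{symm2}: it compactifies the singular set, and — this is the key point your proposal misses — Lemma \ref{v} shows that the transformed solution automatically satisfies the global hypotheses $v\in L^{2^{\ast\ast}}(\Sigma)$ and $\Delta v\in L^{2}(\Sigma)$ away from $Z_{0}$, which is exactly what replaces a decay assumption when starting the plane at $\lambda\to-\infty$; Lemmas \ref{cap1}--\ref{kcap} supply the invariance of null capacity under inversion needed for the ``furthermore'' part. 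To salvage your integral-equation route you would first have to prove $u^{p}\in L^{1}_{loc}(\R^{N})$, some substitute for superharmonicity, and a growth bound on $u$, none of which follow from the stated hypotheses.
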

\begin{Rem}
 Indeed, if $Z$ is a compact subset of the hyperplane $ \left\lbrace x_{1}=0\right\rbrace $, we will see that  \eqref{capacity0} implies  \eqref{capz0} by Lemma \ref{kcap}.
\end{Rem}

 For above Theorem, focusing on the critical case for positive weak solutions, we employ the moving plane technique (as detailed in references \cite{AD,GNN,CL}) to derive the symmetry and monotonicity properties of the solutions to equation \eqref{main3}.

   To state our second symmetry results, we explore the punctured solutions via the upper Minkowski dimension, which is inspired by \cite{DY}. Let us recall the definition of the Minkowski dimension, see \cite{KLV,MP}.
\begin{Def}
Suppose that $E\subset \R^N$ is a compact set, the $\lambda$-dimensional Minkowski $r$-content of $E$ is defined by
	\begin{equation*}
		\mathcal{M}_r^\lambda(E)=\inf _{l \geq 1}\left\{l r^\lambda \mid E \subset \bigcup_{k=1}^l B\left(x_k, r\right), x_k \in E\right\},
	\end{equation*}
and that the upper and lower Minkowski dimensions are defined, respectively, as
\begin{equation*}
\overline{dim}_{M}(E)=\inf\left\lbrace\lambda\geq0\ \left| \limsup_{r\rightarrow 0} \mathcal{M}_r^\lambda(E)=0\right. \right\rbrace ,
\end{equation*}
and
\begin{equation*}
	\underline{dim}_{M}(E)=\inf\left\lbrace\lambda\geq0\ \left| \liminf_{r\rightarrow 0} \mathcal{M}_r^\lambda(E)=0\right. \right\rbrace.
\end{equation*}
If $\overline{dim}_{M}(E)=\underline{dim}_{M}(E)$, the common value is called the Minkowski dimensions ${dim}_{M}(E)$.
\end{Def}


\begin{thm}\label{symsub}
	Let $N\geq5$ and $u\in C^{4}(\R^{N}\setminus  Z)$ be a positive punctured solution to \eqref{main3} with $\frac{N}{N-4}<p\leq\frac{N+4}{N-4}$. Assume that $u$ has non-removable singularity in the singular set $ Z$.
If $ Z$ is a compact subset of the hyperplane $ \left\lbrace x_{1}=0\right\rbrace $ with $\overline{dim}_{M}( Z)<N-\frac{4p}{p-1}$  or is a smooth k-dimensional closed manifold with $k\leq N-\frac{4p}{p-1}$, then $u$ is symmetric with respect to the  hyperplane $ \left\lbrace x_{1}=0\right\rbrace $ and increasing in the $x_{1}$-direction in $\left\lbrace x_{1}<0 \right\rbrace$.
\end{thm}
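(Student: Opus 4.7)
The plan is to run the moving plane method on a suitable auxiliary elliptic system. First I would show that $v := -\Delta u > 0$ throughout $\R^{N}\setminus Z$, by a decomposition argument for positive biharmonic functions with non-removable singularities along the lines of Lin \cite{Lin} and Guo--Huang--Wang--Wei \cite{GHWW}. Equation \eqref{main3} then recasts as the cooperative system
\begin{equation*}
	-\Delta u = v,\qquad -\Delta v = u^{p},\qquad u,v>0 \ \text{in}\ \R^{N}\setminus Z,
\end{equation*}
to which maximum principles apply. Since $Z$ is compact, $(u,v)$ is a classical solution in the exterior of a large ball; the hypothesis $p>\frac{N}{N-4}$, equivalent to $\frac{4}{p-1}<N-4$, then provides a priori decay $u(x)=O(|x|^{-4/(p-1)})$ and $v(x)=O(|x|^{-4/(p-1)-2})$ as $|x|\to\infty$ via standard blow-up/rescaling. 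In the critical case $p=\frac{N+4}{N-4}$ I would additionally apply a Kelvin transform at some $z_{0}\notin Z$ to compactify infinity.

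\textbf{Moving planes on the system.}
For $\lambda<0$ set $\Sigma_{\lambda}=\{x_{1}<\lambda\}$, $x^{\lambda}=(2\lambda-x_{1},x')$ the reflection across $\{x_{1}=\lambda\}$, and $W_{1}^{\lambda}(x)=u(x^{\lambda})-u(x)$, $W_{2}^{\lambda}(x)=v(x^{\lambda})-v(x)$. Note that $Z\cap\Sigma_{\lambda}=\emptyset$ whereas $Z^{\lambda}:=\{x^{\lambda}:x\in Z\}\subset\{x_{1}=2\lambda\}$ lies entirely inside $\Sigma_{\lambda}$. By the mean value theorem,
\begin{equation*}
	-\Delta W_{1}^{\lambda}=W_{2}^{\lambda},\qquad -\Delta W_{2}^{\lambda}=c^{\lambda}(x)\,W_{1}^{\lambda}\ \text{in}\ \Sigma_{\lambda}\setminus Z^{\lambda},
\end{equation*}
with $c^{\lambda}(x)=p\,\xi_{\lambda}(x)^{p-1}\geq 0$. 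The decay above gives $W_{i}^{\lambda}\geq 0$ in $\Sigma_{\lambda}\setminus Z^{\lambda}$ for $\lambda\ll 0$. Set $\lambda_{0}:=\sup\{\lambda\leq 0:W_{i}^{\mu}\geq 0\text{ in }\Sigma_{\mu}\setminus Z^{\mu},\,\forall \mu\leq\lambda,\,i=1,2\}$; the goal is $\lambda_{0}=0$.

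\textbf{Main obstacle and conclusion.}
If $\lambda_{0}<0$, the strong maximum principle for cooperative systems, applied in each component of $\Sigma_{\lambda_{0}}\setminus Z^{\lambda_{0}}$ (non-trivial since $W_{1}^{\lambda_{0}}$ blows up along $Z^{\lambda_{0}}$), gives $W_{i}^{\lambda_{0}}>0$ strictly. The difficulty is pushing the plane past $\lambda_{0}$: since $Z^{\lambda_{0}+\epsilon}$ sits inside the enlarged half-space and the relevant functions blow up there, the classical pointwise narrow-strip argument must be replaced by an \emph{integral} moving-plane step. Concretely, I would test the linearized system against $\eta_{\delta}^{2}(W_{i}^{\lambda_{0}+\epsilon})^{-}$, where $\eta_{\delta}\in C_{c}^{\infty}$ vanishes on a $\delta$-tube $U_{\delta}$ around $Z^{\lambda_{0}+\epsilon}$ and equals $1$ off $U_{2\delta}$, and then pass $\delta\to 0$. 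The hypothesis $\overline{dim}_{M}(Z)<N-\frac{4p}{p-1}$ is precisely what makes all the cutoff errors vanish: cover $Z^{\lambda_{0}+\epsilon}$ by $\lesssim \delta^{-\overline{dim}_{M}(Z)}$ balls of radius $\delta$, combine with the near-singularity bound $u(x)\lesssim \dist(x,Z^{\lambda})^{-4/(p-1)}$ from elliptic rescaling, and verify that error integrals of the form $\int|\Delta\eta_{\delta}|^{s}u^{t}$ tend to $0$ as $\delta\to 0$. The threshold $N-\frac{4p}{p-1}$ is sharp: it is exactly where $u^{p}$ ceases to be locally integrable near $Z$. The smooth $k$-manifold case with $k\leq N-\frac{4p}{p-1}$ is treated analogously, using tubular neighborhoods, with the endpoint admissible because the tube-volume estimate is exact. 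Once $\lambda_{0}=0$, running the analogous moving plane from $+\infty$ gives the reverse inequality, yielding symmetry across $\{x_{1}=0\}$; monotonicity in $\{x_{1}<0\}$ is a direct consequence of the strict positivity $W_{1}^{\lambda}>0$ maintained throughout.
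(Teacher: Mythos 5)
Your outline diverges from the paper's route (the paper never works with the system $-\Delta u=v$, $-\Delta v=u^p$; it proves in Proposition \ref{distribu} that $u\in L^{p}_{loc}(\R^N)$ and solves \eqref{main3} distributionally in all of $\R^N$, upgrades this to the integral representation \eqref{uvi} via a Liouville theorem for biharmonic functions in $L_{0}(\R^N)$, and then runs the method of moving spheres directly on the integral equation, where positivity of the kernel $K(x,\lambda;y,z)$ replaces any maximum principle), and the divergence hides a genuine gap at your very first step. You assert that $v:=-\Delta u>0$ in $\R^N\setminus Z$ can be obtained ``along the lines of Lin and Guo--Huang--Wang--Wei,'' but those superharmonicity arguments are tailored to an isolated singularity: they rest on spherical averages around the puncture and ODE/comparison analysis that has no analogue when $Z$ is a general compact set of possibly positive Minkowski dimension sitting in a hyperplane. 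The paper is explicit that it does \emph{not} assume and does not prove $-\Delta u\ge 0$ in $\R^N\setminus Z$ -- avoiding exactly this obstruction is the point of passing to the integral equation. Without the sign of $-\Delta u$, your cooperative system is not available, the maximum principle comparisons never start, and the rest of the moving-plane scheme has nothing to act on.

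A second gap concerns the a priori bounds you invoke. The decay $u(x)=O(|x|^{-4/(p-1)})$ at infinity and the near-singularity bound $u\lesssim \dist(x,Z)^{-4/(p-1)}$ ``from standard blow-up/rescaling'' require a doubling-lemma/Liouville argument that is only available in the subcritical range; at $p=\frac{N+4}{N-4}$, which your theorem must cover, such universal estimates fail (singular Fowler-type solutions decay like $|x|^{-(N-4)/2}$), and the Kelvin transform you propose as a remedy introduces a new singular point at the center of inversion and transforms the singular set, none of which your outline tracks. By contrast, the paper only needs the integrated bounds \eqref{up3}--\eqref{up4}, obtained by testing the distributional equation with cutoffs, plus the tube-volume estimate $\mathcal{L}^N(\mathcal{N}_{3\varepsilon})\le C\varepsilon^{N-\lambda}$ coming from the Minkowski-dimension hypothesis; no pointwise decay near $Z$ or at infinity is ever used. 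Your use of the dimension hypothesis to kill cutoff errors is in the same spirit as the paper's Proposition \ref{distribu}, so that part of the intuition is sound, but as written the proof does not go through in either the superharmonicity step or the critical case.
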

\begin{Rem}
Let the singular set $Z$ be a compact subset of the hyperplane $ \left\lbrace x_{1}=0\right\rbrace $, then Proposition 4.2 in \cite{AGHW} implies that the condition $\overline{dim}_{M}( Z)<N-\frac{4p}{p-1}$ is stronger than ${\mathop{Cap_{\Delta}}\limits_{\R^N}}( Z)=0$.
\end{Rem}
As demonstrated in Proposition \ref{distribu}, a punctured solution to \eqref{main3} with compact subset $Z$  is always a distributional solution to \eqref{main3} in $\R^N$ when $p>\frac{N}{N-4}$, for the specific case $Z=\left\lbrace 0\right\rbrace $, it has been established in \cite{NY}. To be more precise, Ng\^o and Ye \cite{NY} have provided that a punctured solution $u$ to \eqref{le2} is a distributional solution to \eqref{le2} if and only if $p>\frac{N}{N-4}$. On the other hand,  Theorem 1.1 in \cite{NY} means that \eqref{main3} has no positive distributional solution for $1<p\leq\frac{N}{N-4}$, so $p>\frac{N}{N-4}$ is necessary for distributional solutions. Meanwhile, taking a proof step by step as the proof of Theorem \ref{symsub}, we can obtain the symmetry results for the positive distributional solution of \eqref{main3}.
\begin{thm}
	Let $N\geq5$ and $u\in L^{p}_{loc}(\R^N)\cap C^{1}(\R^{N}\setminus  Z)$ be a positive distributional solution to \eqref{main3} with $\frac{N}{N-4}<p\leq\frac{N+4}{N-4}$. Assume that $u$ has  non-removable singularity in the singular set $ Z$.
	If $ Z$ is a subset of the hyperplane $ \left\lbrace x_{1}=0\right\rbrace $ satisfying $\mathcal{L}^{N}(Z)=0$, where $\mathcal{L}^{N}(\cdot)$ is the Lebesgue measure, then $u$ is symmetric with respect to the  hyperplane $ \left\lbrace x_{1}=0\right\rbrace $ and increasing in the $x_{1}$-direction in $\left\lbrace x_{1}<0 \right\rbrace$.
\end{thm}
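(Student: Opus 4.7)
The plan is to carry out the moving plane method on the cooperative second-order system derived from \eqref{main3} by setting $v = -\Delta u$, just as in the proof of Theorem \ref{symsub}, but with all manipulations performed in the distributional sense on $\R^N$ and with cutoffs adapted to the reflected singular set. First, since $u$ is a positive distributional solution with $u^p \in L^1_{loc}(\R^N)$ and $p>\frac{N}{N-4}$, arguing as in \cite{NY} (using a Liouville-type statement to kill the polyharmonic part, which is forced to vanish by positivity and the integrability of $u^p$) one obtains the representation
\[
u(x) = c_N \int_{\R^N} \frac{u^p(y)}{|x-y|^{N-4}}\, dy,\qquad v(x):=-\Delta u(x) = c_N' \int_{\R^N} \frac{u^p(y)}{|x-y|^{N-2}}\, dy,
\]
so that $v > 0$ a.e., $v \in L^1_{loc}(\R^N)$ is smooth on $\R^N\setminus Z$, and $(u,v)$ solves the cooperative system $-\Delta u = v,\ -\Delta v = u^p$ distributionally on $\R^N$.

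For $\lambda \in \R$, set $\Sigma_\lambda = \{x_1<\lambda\}$, $T_\lambda = \{x_1=\lambda\}$, $x^\lambda$ for reflection across $T_\lambda$, and $u_\lambda=u\circ(\cdot)^\lambda$, $v_\lambda=v\circ(\cdot)^\lambda$, $w_\lambda=u_\lambda-u$, $z_\lambda=v_\lambda-v$. Because $Z\subset\{x_1=0\}$, the open set $\Sigma_\lambda$ contains none of the original singularities for $\lambda<0$, but $u_\lambda,v_\lambda$ inherit singularities on the reflected set $Z^\lambda\subset\{x_1=2\lambda\}\subset\Sigma_\lambda$. For the critical exponent $p=\frac{N+4}{N-4}$, a Kelvin transform centered at some $z_0\notin Z$ (possible since $\mathcal L^N(Z)=0$) provides enough decay at the pole to start the moving planes from $\lambda\ll 0$ with $w_\lambda\leq 0$ and $z_\lambda\leq 0$ on $\Sigma_\lambda$; the subcritical case is treated directly using the integral representation.

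Define $\lambda_0 := \sup\{\lambda\leq 0 : w_\mu\leq 0,\ z_\mu\leq 0\ \text{in}\ \Sigma_\mu\ \text{for all}\ \mu\leq\lambda\}$ and argue by contradiction that $\lambda_0<0$ is impossible. Test the distributional identities $-\Delta w_{\lambda_0}=z_{\lambda_0}$ and $-\Delta z_{\lambda_0}=u_{\lambda_0}^p-u^p$ on $\Sigma_{\lambda_0}$ against $\eta_\varepsilon^2\, w_{\lambda_0}^+$ and $\eta_\varepsilon^2\, z_{\lambda_0}^+$, where $\eta_\varepsilon\in C_c^\infty(\Sigma_{\lambda_0})$ vanishes in an $\varepsilon$-tubular neighborhood of $Z^{\lambda_0}$ and $\eta_\varepsilon\to 1$ in $L^2$; the cutoff errors vanish as $\varepsilon\to 0$ thanks to $\mathcal L^N(Z^{\lambda_0})=0$ and the local integrability provided by the representation formulas. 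One then obtains the standard narrow-region/Sobolev inequality, which together with continuity of $w_\lambda,z_\lambda$ in $\lambda$ forces the plane to move strictly past $\lambda_0$, a contradiction. Once $\lambda_0=0$ is reached and the symmetric argument is applied from the right, symmetry of $u,v$ across $\{x_1=0\}$ and strict monotonicity on $\{x_1<0\}$ follow from the strong maximum principle on $\Sigma_0$, which is disjoint from $Z$.

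The main obstacle is precisely this truncation procedure around the reflected singular set $Z^\lambda$: unlike in Theorem \ref{symsub}, where a small Minkowski dimension provides fine control of the cutoff gradients, here only $\mathcal L^N(Z)=0$ is available, so extra care is required to ensure that $\int_{\Sigma_{\lambda_0}}|\nabla\eta_\varepsilon|^2\bigl(|w_{\lambda_0}^+|^2+|z_{\lambda_0}^+|^2\bigr)\,dx\to 0$ as $\varepsilon\to 0$; the integral representations of $u_\lambda$ and $v_\lambda$ around $Z^\lambda$ are used here to guarantee the quantitative decay needed. This is also where the hypothesis $p>\frac{N}{N-4}$ enters essentially, via \cite{NY}, guaranteeing both the distributional formulation on all of $\R^N$ and the Riesz representations invoked above. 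Modulo this technical input, the argument is structurally identical to the punctured-solution proof of Theorem \ref{symsub}.
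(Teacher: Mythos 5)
Your plan diverges from the paper's: the paper proves this theorem \emph{verbatim} along the lines of Theorem \ref{symsub}, i.e.\ it first derives the Riesz representation \eqref{integ} from the distributional formulation (via the integrability estimates \eqref{up3}--\eqref{up4} and the Liouville lemma for $\Delta^2 w=0$ in $L_0(\R^N)$) and then runs the method of moving \emph{spheres} directly on the integral equation. In that argument no second-order system, no Kelvin transform and, crucially, no cutoff functions appear: the hypothesis $\mathcal L^{N}(Z)=0$ enters only to discard $Z$ from the integrals and to obtain the strict inequalities after \eqref{uintegral}, and the subcritical and critical cases are handled together through the exponent $\tau=N+4-p(N-4)\ge 0$.

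The genuine gap in your proposal is the truncation step around the reflected set $Z^{\lambda}$. To absorb the cutoff errors in the moving-plane/narrow-region inequality you need quantities like $\int|\nabla\eta_{\varepsilon}|^{2}\bigl((w_{\lambda_0}^{+})^{2}+(z_{\lambda_0}^{+})^{2}\bigr)dx$ to vanish as $\varepsilon\to0$, and making $\int|\nabla\eta_{\varepsilon}|^{2}\,dx$ (let alone the weighted version) small is a \emph{capacity} condition, not a measure condition. Under the sole hypothesis $\mathcal L^{N}(Z)=0$, the set $Z$ can be, for instance, a closed $(N-1)$-dimensional ball inside $\{x_{1}=0\}$: this set is Lebesgue-null but has positive Newtonian and biharmonic capacity, so \emph{no} family of cutoffs vanishing near $Z^{\lambda}$ has small Dirichlet energy, and the ``quantitative decay from the integral representations'' you invoke does not exist --- near a non-removable singularity $u_{\lambda}$ blows up rather than decays. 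This is exactly why the paper's weak-solution result (Theorem \ref{symm2}) requires ${\rm Cap}_{\Delta}=0$, whereas the present theorem, assuming only $\mathcal L^{N}(Z)=0$, is proved by the cutoff-free moving-sphere argument. A further sign that your plane-moving setup is off: you aim at $w_{\lambda}=u_{\lambda}-u\le 0$ in $\Sigma_{\lambda}$, but for $\lambda<0$ the singularities of $u_{\lambda}$ lie on $Z^{\lambda}\subset\Sigma_{\lambda}$, so that inequality fails near $Z^{\lambda}$; the attainable inequality (and the one consistent with the stated monotonicity, $u$ increasing in $x_{1}$ on $\{x_{1}<0\}$) is the opposite one, $u\le u_{\lambda}$ in $\Sigma_{\lambda}$. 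Once you have the representation formula in hand, the efficient (and the paper's) path is to run the moving spheres on the integral equation itself, which bypasses all of these issues.
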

Regarding the subcritical (critical) case, our approach is inspired by \cite{JX, DY}.  Instead of studying the partial differential equation \eqref{main3}, we investigate the singular solutions of the global integral equation 	
\begin{equation}\label{integ}
	u(x)=c_{N,2}\int_{\R^N}\frac{u^{p}(y)}{|x-y|^{N-4}}dy, \quad \text{for } x\in\R^N\setminus Z,
\end{equation}
by Lemma \ref{integral}. Hence, we can prove Theorem \ref{symsub} via the moving spheres arguments of integral form \cite{Li}.
Recall that in the classification of positive solutions to equation \eqref{main3} with $Z=\left\lbrace 0\right\rbrace $, as studied by \cite{Lin}, it is important that every positive solutions of \eqref{main3} satisfies the superharmonic condition $-\Delta u\geq0$ in $\R^N\setminus\left\lbrace 0\right\rbrace $.  This sign condition is crucial for the application of the maximum principle, and remains invariant under the Kelvin transform. It is worthy noting that in our work, we do not assume the superharmonic condition $-\Delta u\geq0$ in $\R^N\setminus Z$.

\medskip

This paper is organized as follows. In Section 2, we study symmetry and monotonicity properties of weak solutions to the critical biharmonic equation \eqref{main}, by means of the moving plane technique. In Section 3, we investigate the
symmetry and monotonicity properties of punctured solutions for \eqref{main3} for subcritical (critical) case, with the help of the methods of moving spheres. In this paper, $c, C$ will be used to denote different constants.

\section{Symmetry results of weak solutions to equation \eqref{main}}
In this section, we aim to demonstrate the symmetry of the positive weak solutions for equation \eqref{main}. This will be derived from the subsequent theorem, which serves as our primary symmetry result.
\begin{thm}\label{symm}
	Let $N\geq5$ and $u\in H^{2}_{loc}(\R^{N}\setminus  Z)$ be a positive weak solution to \eqref{main}. Assume that $u$ has non-removable singularity in the singular set $ Z$, where $ Z$ is a compact subset of the hyperplane $ \left\lbrace x_{1}=0\right\rbrace $ such that \begin{equation}\label{capa0}
		\mathop{Cap_{\Delta}}_{\R^N}( Z)=0.
	\end{equation}
	Assume additionally that $u\in L^{2^{\ast\ast}}(\Sigma)$ with $2^{\ast\ast}=\frac{2N}{N-4}$ and $\Delta u\in L^{2}(\Sigma)$ for every $\Sigma$ which is a positive distance away from $ Z$.
	Then $u$ is symmetric with respect to the  hyperplane $ \left\lbrace x_{1}=0\right\rbrace $ and increasing in the $x_{1}$-direction in $\left\lbrace x_{1}<0 \right\rbrace$.
\end{thm}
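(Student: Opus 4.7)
The plan is to apply the method of moving planes directly to the weak equation. For $\lambda\leq 0$ write $T_{\lambda}=\{x_{1}=\lambda\}$, $\Sigma_{\lambda}=\{x_{1}<\lambda\}$, $x^{\lambda}=(2\lambda-x_{1},x_{2},\dots,x_{N})$, $u_{\lambda}(x)=u(x^{\lambda})$, and $w_{\lambda}=u_{\lambda}-u$. The goal is to prove that $w_{\lambda}\geq 0$ on $\Sigma_{\lambda}$ for every $\lambda\leq 0$; running the mirror argument from the right of $T_{0}$ then yields symmetry about $\{x_{1}=0\}$, and monotonicity in $x_{1}$ on $\{x_{1}<0\}$ follows by comparing aligned points via the plane through their midpoint. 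A basic structural remark is that for $\lambda<0$ the original singular set satisfies $Z\cap\Sigma_{\lambda}=\emptyset$ while its reflection $Z^{\lambda}$ lies inside $\Sigma_{\lambda}$, and the non-removability hypothesis forces $u_{\lambda}\to+\infty$ along $Z^{\lambda}$ whereas $u$ stays bounded there; in particular $w_{\lambda}^{-}:=\max(-w_{\lambda},0)$ vanishes on a neighborhood of $Z^{\lambda}$, removing the principal obstruction to using $w_{\lambda}^{-}$ as a test function.

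The heart of the proof is a critical-exponent energy inequality. Subtracting the weak equations for $u_{\lambda}$ and $u$ gives $\Delta^{2}w_{\lambda}=u_{\lambda}^{p}-u^{p}$ weakly in $\Sigma_{\lambda}\setminus Z^{\lambda}$, and extending $w_{\lambda}^{-}$ by zero across $T_{\lambda}$ is natural because both $w_{\lambda}=0$ and $\Delta w_{\lambda}=0$ on $T_{\lambda}$ (the latter since $\Delta u_{\lambda}(x)=\Delta u(x^{\lambda})$ coincides with $\Delta u(x)$ at fixed points of the reflection), so the integration by parts leaves no boundary term. The capacity hypothesis $\mathop{Cap_{\Delta}}(Z)=0$ furnishes cutoffs $\eta_{\varepsilon}$ equal to $1$ near $Z$ with $\|\Delta\eta_{\varepsilon}\|_{L^{2}}\to 0$; combined with the standing integrability $u\in L^{2^{\ast\ast}}$ and $\Delta u\in L^{2}$ at positive distance from $Z$, this allows one to test against $(1-\eta_{\varepsilon})w_{\lambda}^{-}$ and let $\varepsilon\to 0$. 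Using the mean value theorem and the pointwise bound $u_{\lambda}\leq u$ on $\{w_{\lambda}<0\}$, this produces
\begin{equation*}
\int_{\Sigma_{\lambda}}|\Delta w_{\lambda}^{-}|^{2}\,dx \;\leq\; p\int_{\Sigma_{\lambda}\cap\{w_{\lambda}<0\}} u^{p-1}(w_{\lambda}^{-})^{2}\,dx.
\end{equation*}
For the critical exponent $p=\frac{N+4}{N-4}$ (so that $p-1=\tfrac{8}{N-4}$), Hölder with exponents $(\tfrac{N}{4},\tfrac{N}{N-4})$ followed by the biharmonic Sobolev inequality $\|\phi\|_{L^{2^{\ast\ast}}}\leq C\|\Delta\phi\|_{L^{2}}$ bounds the right-hand side by $C\|u\|_{L^{2^{\ast\ast}}(\mathrm{supp}\,w_{\lambda}^{-})}^{p-1}\|\Delta w_{\lambda}^{-}\|_{L^{2}}^{2}$. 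Whenever $\|u\|_{L^{2^{\ast\ast}}(\mathrm{supp}\,w_{\lambda}^{-})}$ falls below a fixed universal threshold, one is therefore forced to $\Delta w_{\lambda}^{-}\equiv 0$ and hence $w_{\lambda}^{-}\equiv 0$.

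The iteration is then standard. For $\lambda\to -\infty$ the slab $\Sigma_{\lambda}$ escapes to infinity so the global integrability makes $\|u\|_{L^{2^{\ast\ast}}(\Sigma_{\lambda})}\to 0$, and the dichotomy above starts the moving plane. Set $\lambda_{0}:=\sup\{\lambda\leq 0 : w_{\mu}\geq 0\text{ on }\Sigma_{\mu}\text{ for all }\mu\leq\lambda\}$ and argue by contradiction that $\lambda_{0}<0$: continuity of $w_{\lambda}$ in $\lambda$ together with the strict sign $w_{\lambda_{0}}>0$ in $\Sigma_{\lambda_{0}}\setminus Z^{\lambda_{0}}$ (obtained via a linearized/comparison argument, which replaces the unavailable strong maximum principle for the biharmonic) make the Lebesgue measure of $\mathrm{supp}\,w_{\lambda}^{-}$ tend to zero for $\lambda$ slightly past $\lambda_{0}$; absolute continuity of $A\mapsto\int_{A}u^{2^{\ast\ast}}$ then preserves the smallness condition, reactivating the energy estimate and forcing $w_{\lambda}^{-}\equiv 0$—contradicting maximality. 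Hence $\lambda_{0}=0$ and the conclusions follow. The principal technical obstacle is the rigorous use of $w_{\lambda}^{-}$ as a test function for a fourth-order equation: since $u\in H^{2}_{loc}(\R^{N}\setminus Z)$ only guarantees $w_{\lambda}^{-}\in H^{1}$ (the negative part of an $H^{2}$ function generically fails to be $H^{2}$), the energy identity must be obtained through a careful approximation that combines the vanishing of $w_{\lambda}^{-}$ near $Z^{\lambda}$, the capacity cutoffs near $Z$, and the Navier-type boundary behavior $\Delta w_{\lambda}=0$ on $T_{\lambda}$—a non-trivial substitute for the system reformulation one would ordinarily invoke under the assumption $-\Delta u\geq 0$, which this paper deliberately avoids.
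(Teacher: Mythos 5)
There is a genuine gap at the heart of your argument: the energy inequality
\begin{equation*}
\int_{\Sigma_{\lambda}}|\Delta w_{\lambda}^{-}|^{2}\,dx \;\leq\; p\int_{\Sigma_{\lambda}\cap\{w_{\lambda}<0\}} u^{p-1}(w_{\lambda}^{-})^{2}\,dx
\end{equation*}
cannot be obtained by testing the fourth-order equation with $(1-\eta_{\varepsilon})w_{\lambda}^{-}$. For a second-order equation the truncation trick works because $\nabla w_{\lambda}^{-}=-\nabla w_{\lambda}\chi_{\{w_{\lambda}<0\}}$ in $H^{1}$; for the biharmonic it fails because $w_{\lambda}^{-}\notin H^{2}$ (as you yourself note) and, more to the point, $\Delta(w_{\lambda}^{-})\neq -\Delta w_{\lambda}\chi_{\{w_{\lambda}<0\}}$: the distributional Laplacian of the truncation carries a measure on the free boundary $\partial\{w_{\lambda}<0\}$, so $\int\Delta w_{\lambda}\,\Delta\bigl((1-\eta_{\varepsilon})w_{\lambda}^{-}\bigr)$ does not converge to $-\int|\Delta w_{\lambda}^{-}|^{2}$ and even the left-hand quantity $\int|\Delta w_{\lambda}^{-}|^{2}$ need not be finite or meaningful. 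The remedies you list (vanishing of $w_{\lambda}^{-}$ near $Z^{\lambda}$, capacity cutoffs near $Z$, Navier behavior on $T_{\lambda}$) all concern the boundary of the domain and the singular set, not the interior free boundary where the obstruction lives; this is exactly the known reason why moving planes for $\Delta^{2}$ without the sign assumption $-\Delta u\geq 0$ cannot be run on a single truncated energy, and your proposal names the obstacle without resolving it. The paper's Lemma 2.3 avoids truncating $w_{\lambda}$ altogether: it represents auxiliary functions $\psi_{k}$ through the Dirichlet Green function of the half-space $\Sigma_{\lambda}$ applied to truncations of $(w_{\lambda}^{+})^{\frac{N+4}{N-4}}$ and of $(-\Delta w_{\lambda})^{+}$, and uses Hardy--Littlewood--Sobolev plus the capacity cutoffs to produce the two coupled second-order inequalities $\|w_{\lambda}^{+}\|_{2^{\ast\ast}}\leq\|(-\Delta w_{\lambda})^{+}\|_{2}$ and $\|(-\Delta w_{\lambda})^{+}\|_{2}\leq\|u\|_{2^{\ast\ast}}^{\frac{8}{N-4}}\|w_{\lambda}^{+}\|_{2^{\ast\ast}}$, which together play the role of your single estimate.

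Two further points. First, your claim that $w_{\lambda}^{-}$ vanishes in a neighborhood of $Z^{\lambda}$ requires $u$ to blow up uniformly along the whole singular set, which the hypothesis of a non-removable singularity does not provide; the paper never needs such a statement because the reflected set $Z_{\lambda}$ is handled by the capacity cutoffs inside the integral estimates. Second, at $\lambda_{0}$ your strict inequality is asserted via an unspecified ``linearized/comparison argument''; the paper obtains it concretely from its second inequality: $w_{\lambda_{0}}^{+}=0$ forces $(-\Delta w_{\lambda_{0}})^{+}=0$, hence $-\Delta w_{\lambda_{0}}\leq 0$, and the classical (second-order) strong maximum principle then gives $w_{\lambda_{0}}<0$ in $\Sigma_{\lambda_{0}}^{\ast}$. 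Without an intermediate quantity like $(-\Delta w_{\lambda})^{+}$ your scheme has no access to such a principle, so both the start and the continuation of the moving plane rest on an estimate you cannot derive.
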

In the following, we will prove Theorem \ref{symm} based on the moving plane method. First, let's fix some notations. For any $\lam<0$, define
\begin{equation*}
	\Sigma_{\lam}=\left\lbrace x=(x_{1},\cdots,x_{n})\ |\ x_{1}<\lam\right\rbrace,
\end{equation*}
and \begin{equation*}
	  x^{\lam}=(2\lam-x_{1},\cdots,x_{n})
\end{equation*}
which is the reflection through the hyperplane $T_{\lambda}:=\left\lbrace x_{1}=\lambda \right\rbrace $. Denote \begin{equation*}
 	 Z_{\lambda}=\left\lbrace x\in\Sigma_{\lam}\ |\ x^{\lambda}\in Z\right\rbrace, \sp	\Sigma_{\lam}^{\ast}=\Sigma_{\lam}\setminus Z_{\lambda},
 \end{equation*}
and \begin{equation*}
	u_{\lambda}(x)=u(x^{\lambda}),\sp w_{\lambda}(x)=u(x)-u_{\lambda}(x),\sp x\in\Sigma_{\lam}^{\ast}.
\end{equation*}
Let $u$ be a positive  weak solution of \eqref{main}, then we get
	\begin{equation}\label{u1}
	\int_{\R^N}\Delta u_{\lambda} \Delta\varphi dx=	\int_{\R^N}u_{\lambda}^{\frac{N+4}{N-4}} \varphi, \quad \forall\varphi\in C^{2}_{c}(\mathbb{R}^{N}\setminus  Z_{\lambda}).
\end{equation}	
Next we show some lemmas of vital importance in the proof of symmetry.
\begin{lem}\label{cap0}
	Let $\Omega\subset\R^N$ be an open bounded subset and $ Z\subset\Omega$ be a compact subset. Then there exists a constant $C_{1}>1$ such that
	\begin{equation*}
			\mathop{Cap_{\Delta}}_{\R^N}( Z)\leq\mathop{Cap_{\Delta}}_{\Omega}( Z)\leq C_{1}	\mathop{Cap_{\Delta}}_{\R^N}( Z).
	\end{equation*}
\end{lem}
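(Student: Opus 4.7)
The plan is to treat the two inequalities separately. The first inequality $\mathop{Cap_{\Delta}}_{\R^N}(Z)\le\mathop{Cap_{\Delta}}_{\Omega}(Z)$ is immediate: any admissible $\varphi\in C^\infty_c(\Omega)$ with $\varphi\ge 1$ in a neighborhood of $Z$ extends by zero to an element of $C^\infty_c(\R^N)$ with the same constraint and the same value of $\int|\Delta\varphi|^2\,dx$, so the infimum on $\R^N$ is at most that on $\Omega$.

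For the reverse inequality I would use a cutoff construction. Since $Z\Subset\Omega$ is compact, pick open sets $Z\subset U\Subset V\Subset\Omega$ and fix once and for all a cutoff $\eta\in C^\infty_c(V)$ with $\eta\equiv 1$ on $U$ and $0\le\eta\le 1$. For any admissible $\varphi\in C^\infty_c(\R^N)$ for $\mathop{Cap_{\Delta}}_{\R^N}(Z)$, set $\psi:=\eta\varphi\in C^\infty_c(\Omega)$. Because $\varphi\ge 1$ on some open neighborhood $W$ of $Z$ and $\eta=1$ on $U\supset Z$, one has $\psi\ge 1$ on the open neighborhood $U\cap W$ of $Z$, so $\psi$ is admissible for $\mathop{Cap_{\Delta}}_{\Omega}(Z)$.

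Expanding $\Delta\psi=\eta\Delta\varphi+2\nabla\eta\cdot\nabla\varphi+\varphi\Delta\eta$ and using $(a+b+c)^2\le 3(a^2+b^2+c^2)$ yields
$$
\int_\Omega|\Delta\psi|^2\,dx\le 3\int_{\R^N}|\Delta\varphi|^2\,dx+12\|\nabla\eta\|_\infty^2\int_K|\nabla\varphi|^2\,dx+3\|\Delta\eta\|_\infty^2\int_K\varphi^2\,dx,
$$
where $K:=\mathrm{supp}(\nabla\eta)\cup\mathrm{supp}(\Delta\eta)$ is compact and contained in $\Omega\setminus Z$. The main obstacle is to absorb the last two terms into $\int_{\R^N}|\Delta\varphi|^2\,dx$ by a constant independent of $\varphi$, which would fail without exploiting the global structure of admissible $\varphi$'s.

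This is achieved via the Sobolev embeddings available for $\varphi\in C^\infty_c(\R^N)$ when $N\ge 5$, namely $\|\varphi\|_{L^{2N/(N-4)}(\R^N)}\le C(N)\|\Delta\varphi\|_{L^2(\R^N)}$ and $\|\nabla\varphi\|_{L^{2N/(N-2)}(\R^N)}\le C(N)\|\Delta\varphi\|_{L^2(\R^N)}$, combined with H\"older's inequality on the bounded set $K$:
$$
\int_K\varphi^2\,dx\le C(N)|K|^{4/N}\|\Delta\varphi\|_{L^2(\R^N)}^2,\qquad \int_K|\nabla\varphi|^2\,dx\le C(N)|K|^{2/N}\|\Delta\varphi\|_{L^2(\R^N)}^2.
$$
Substituting back gives $\int_\Omega|\Delta\psi|^2\,dx\le C_1\int_{\R^N}|\Delta\varphi|^2\,dx$ with $C_1>1$ depending only on $N$, $|K|$, $\|\nabla\eta\|_\infty$ and $\|\Delta\eta\|_\infty$; taking the infimum over admissible $\varphi$ then finishes the proof.
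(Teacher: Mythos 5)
Your proposal is correct, and it rests on the same underlying mechanism as the paper's proof: build from a global competitor $\varphi\in C^{\infty}_{c}(\R^N)$ a compactly supported competitor in $\Omega$ that still dominates $1$ near $Z$, and absorb the extra lower-order terms into $\|\Delta\varphi\|_{L^{2}(\R^N)}^{2}$ via the Sobolev inequality available for $N\geq 5$ together with H\"older on a bounded set. The execution differs, though: the paper produces its competitor $\widetilde{\varphi}_{\varepsilon}$ by invoking the extension theorem from Evans (choosing $\Omega'\subset\subset\Omega$ with $\widetilde{\varphi}_{\varepsilon}=\varphi_{\varepsilon}$ on $\Omega'$) and then estimates its full $H^{2}$ norm, passing through $\|\varphi_{\varepsilon}\|_{L^{2^{\ast\ast}}}\leq C\|\Delta\varphi_{\varepsilon}\|_{L^{2}}$; your argument instead multiplies by a fixed cutoff $\eta$ and expands $\Delta(\eta\varphi)$ directly, treating the first-order commutator term explicitly through $\|\nabla\varphi\|_{L^{2N/(N-2)}(\R^N)}\leq C\|\Delta\varphi\|_{L^{2}(\R^N)}$. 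Your version is more elementary and self-contained (no extension theorem), and it makes visible two points the paper's displayed chain glosses over: the gradient contribution hidden in the $H^{2}(\Omega')$ norm, and the fact that comparing $L^{2}$ with $L^{2^{\ast\ast}}$ requires working on a set of finite measure (your set $K$), not on all of $\R^N$. The only implicit hypothesis, shared with the paper, is $N\geq 5$, which is the standing assumption of the paper; also note the constant you obtain depends on the fixed cutoff $\eta$ (hence on $Z$ and $\Omega$), exactly as the paper's constant depends on $\Omega'$ and $\Omega$, which is all the lemma requires.
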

\begin{proof}
	By the definition of \eqref{defcap}, we easily have that \begin{equation*}
		\mathop{Cap_{\Delta}}_{\R^N}( Z)\leq\mathop{Cap_{\Delta}}_{\Omega}( Z),
	\end{equation*}
and for any $\varepsilon>0$, there exists $\varphi_{\varepsilon}\in C^{\infty}_{c}(\R^N)$,  $\varphi_{\varepsilon}\geq1$ in a neighborhood of $ Z$, such that
 \begin{equation}\label{rnn}
\|\Delta\varphi_{\varepsilon}\|_{L^{2}(\R^N)}^{2}\leq\mathop{Cap_{\Delta}}_{\R^N}( Z)+\varepsilon .
\end{equation}
By the extension theorem (see section 5.4 in\cite{Evans}), we can choose an open subset $\Omega^{\prime}\subset\subset\Omega$ with $\partial\Omega^{\prime}$ being $C^{2}$  and a function $\widetilde{\varphi}_{\varepsilon}\in C^{\infty}_{c}(\Omega) $ such that
\begin{equation*}
	\widetilde{\varphi}_{\varepsilon}=\begin{cases} \varphi_{\varepsilon} & \text { in }  \Omega^{\prime} \\ 0 & \text { in } \R^N\setminus\Omega\end{cases},
\end{equation*}
and satisfying
\begin{equation}\label{extend}
	\|\widetilde{\varphi}_{\varepsilon}\|_{H^{2}(\R^N)}\leq C \|\widetilde{\varphi}_{\varepsilon}\|_{H^{2}(\Omega^{\prime})} ,
\end{equation}
where $C$ is a suitable positive constant depending on $\Omega^{\prime}$ and $\Omega$. It follows from the (\ref{defcap}) that
 \begin{equation*}
\begin{aligned}
\mathop{Cap_{\Delta}}_{\Omega}( Z)&\leq\|\widetilde{\varphi}_{\varepsilon}\|^{2}_{H^{2}(\R^N)}\leq C^{2}\|\widetilde{\varphi}_{\varepsilon}\|^{2}_{H^{2}(\Omega^{\prime})}\\
&\leq C^{2}\left( \|{\varphi}_{\varepsilon}\|^{2}_{L^{2}(\R^N)}+\|\Delta {\varphi}_{\varepsilon}\|^{2}_{L^{2}({\R^N})}\right) \\
&\leq C_{1}\left( \|{\varphi}_{\varepsilon}\|^{2}_{L^{2^{\ast\ast}}(\R^N)}+\| \Delta{\varphi}_{\varepsilon}\|^{2}_{L^{2}({\R^N})}\right)\\
&\leq C_{1}\left( \|\Delta{\varphi}_{\varepsilon}\|^{2}_{L^{2}(\R^N)}+\| \Delta{\varphi}_{\varepsilon}\|^{2}_{L^{2}({\R^N})}\right)\\
&\leq C_{1}\left( \mathop{Cap_{\Delta}}_{\R^N}( Z)+\varepsilon\right),
\end{aligned}
\end{equation*}
where the last but one inequality using the Sobolev embedding theorem.
Letting $\varepsilon\rightarrow0$, we have 	\begin{equation*}
\mathop{Cap_{\Delta}}_{\Omega}( Z)\leq C_{1}	\mathop{Cap_{\Delta}}_{\R^N}( Z).
\end{equation*}
The proof is complete.
\end{proof}
Since $ Z$ is compact and 	${\mathop{Cap_{\Delta}}\limits_{\R^N}}( Z)=0$, we know that $ Z_{\lambda}$ is compact and 	${\mathop{Cap_{\Delta}}\limits_{\R^N}}( Z_{\lambda})=0$. Moreover, from Lemma \ref{cap0}, we obtain $\mathop{Cap_{\Delta}}\limits_{B_{\varepsilon}^{\lambda}}( Z_{\lambda})=0$ for any open neighborhood $B_{\varepsilon}^{\lambda}$ of $ Z_{\lambda}$, where $B_{\varepsilon}^{\lambda}:=\left\lbrace x\in\R^N\ | \ dist(x, Z_{\lambda})<\varepsilon\right\rbrace $ is the tubular neighborhood of radius $\varepsilon$ and centered about $ Z_{\lambda}$, see \cite{PPS}. Thus, by the definition of \eqref{defcap}, for any $\varepsilon>0$, there exists $\varphi_{\varepsilon}\in C^{\infty}_{c}(B_{\varepsilon}^{\lambda})$,  $\varphi\geq1$ in a neighborhood $B_{\delta}^{\lambda}\subset B_{\varepsilon}^{\lambda}$ of $ Z_{\lambda}$, such that
\begin{equation}\label{rn}
 \int_{B_{\varepsilon}^{\lambda}}|\Delta\varphi_{\varepsilon}|^{2}dx<\varepsilon .
\end{equation}
Then we can construct a function $\eta_{\varepsilon}\in C^{2}(\R^N)$, which satisfies
\begin{equation}\label{eta}
\eta_{\varepsilon}=\begin{cases} 0 & \text { in }  B_{\delta}^{\lambda} \\ 1 & \text { in } \R^N\setminus B_{\varepsilon}^{\lambda}\end{cases},
\end{equation}
and
\begin{equation}\label{eta1}
	\int_{B_{\varepsilon}^{\lambda}}|\Delta \eta_{\varepsilon}|^{2}dx+	\int_{B_{\varepsilon}^{\lambda}}|\nabla \eta_{\varepsilon}|^{2}dx<C\varepsilon,
\end{equation}
where the constant $C>0$ is independing on $\varepsilon$.
To this end we define a function $h(s)\in C^{2}(\R)$, that is
\begin{equation*}
h(t)=
\begin{cases} 1 & \text { if }  t\leq 0 \\ -8(t-\frac{1}{2})^{3}(24t^{2}+6t+1) & \text { if } 0<t<\frac{1}{2}\\ 0 & \text { if }  t\geq \frac{1}{2}
\end{cases}.
\end{equation*}
Let $\eta_{\varepsilon}=h(\varphi_{\varepsilon})$, we have extended $\varphi_{\varepsilon}=0$ in $\R^N\setminus B_{\varepsilon}^{\lambda}$, and it follows that $\eta_{\varepsilon}\in C^{2}(\R^N)$, $0\leq\eta_{\varepsilon}\leq1$.
In addition, there holds
\begin{equation*}
	\int_{B_{\varepsilon}^{\lambda}}|\Delta \eta_{\varepsilon}|^{2}dx\leq C\int_{B_{\varepsilon}^{\lambda}}\left( |\nabla \varphi_{\varepsilon}|^{2}+|\Delta \varphi_{\varepsilon}|\right)^{2}dx\leq C\int_{B_{\varepsilon}^{\lambda}}\left( |\nabla \varphi_{\varepsilon}|^{4}+|\Delta \varphi_{\varepsilon}|^{2}\right)dx\leq C\int_{B_{\varepsilon}^{\lambda}}|\Delta \varphi_{\varepsilon}|^{2}dx<C\varepsilon,
\end{equation*}
the third inequality by using the Gagliardo-Nirenberg inequality, see \cite{NL}.
By the H\"{o}lder inequality, we have
\begin{equation*}
	\begin{aligned}
\int_{B_{\varepsilon}^{\lambda}}|\nabla \eta_{\varepsilon}|^{2}dx
&\leq C\int_{B_{\varepsilon}^{\lambda}}|\nabla \varphi_{\varepsilon}|^{2}dx
\leq C\left(  \mathcal{L}^{N}(B_{\varepsilon}^{\lambda})\right)^{\frac{1}{2}} \left( \int_{B_{\varepsilon}^{\lambda}}|\nabla \varphi_{\varepsilon}|^{4}dx\right) ^{\frac{1}{2}}\\
&\leq C\left(  \mathcal{L}^{N}(B_{\varepsilon}^{\lambda})\right)^{\frac{1}{2}} \left(  \int_{B_{\varepsilon}^{\lambda}}|\Delta \varphi_{\varepsilon}|^{2}dx\right) ^{\frac{1}{2}} <C\varepsilon,
	\end{aligned}
\end{equation*}
where $\mathcal{L}^{N}(\cdot)$ is the Lebesgue measure.

\medskip

Now we deduce some crucial estimates for the method of moving planes.
\begin{lem}\label{lem1}
Under the assumption of Theorem \ref{symm}, then for any $\lambda<0$, there holds
\begin{equation}\label{w1}
	\|w_{\lambda}^{+}\|_{L^{2^{\ast\ast}}(\Sigma_{\lambda}^{\ast})}\leq 	\|(-\Delta w_{\lambda})^{+}\|_{L^{2}(\Sigma_{\lambda}^{\ast})}
\end{equation}	
and
\begin{equation}\label{w2}
		\|(-\Delta w_{\lambda})^{+}\|_{L^{2}(\Sigma_{\lambda}^{\ast})}\leq \|u\|_{L^{2^{\ast\ast}}(\Sigma_{\lambda}^{\ast,+})}^{\frac{8}{N-4}}	\| w_{\lambda}^{+}\|_{L^{2^{\ast\ast}}(\Sigma_{\lambda}^{\ast})}	,
\end{equation}
where $w_{\lambda}^{+}=\max\left\lbrace 0, w_{\lambda} \right\rbrace $ and $\Sigma_{\lambda}^{\ast,+}=\left\lbrace x\in\Sigma_{\lambda}^{\ast}\ |\ w_{\lambda}^{+}>0 \right\rbrace $.
\end{lem}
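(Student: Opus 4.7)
\bigskip

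The plan is to reduce both estimates to properties of the auxiliary second-order quantity $V_\lambda := -\Delta w_\lambda$ defined on $\Sigma_\lambda^*$. Subtracting the weak equation \eqref{u1} for $u_\lambda$ from \eqref{u0} for $u$ shows $\Delta^2 w_\lambda = u^{(N+4)/(N-4)} - u_\lambda^{(N+4)/(N-4)}$ weakly in $\Sigma_\lambda^*$, equivalently $-\Delta V_\lambda = u^p - u_\lambda^p$ where $p = (N+4)/(N-4)$. The reflection symmetry gives the homogeneous Dirichlet data $w_\lambda = V_\lambda = 0$ on $T_\lambda = \partial\Sigma_\lambda$, and the hypotheses $u \in L^{2^{**}}$ together with $\Delta u \in L^2$ away from $Z$ put $V_\lambda \in L^2(\Sigma_\lambda^*)$ and ensure appropriate decay at infinity.

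For \eqref{w1}, I would introduce $\widetilde w(x) := \int_{\Sigma_\lambda} G_{\Sigma_\lambda}(x,y)\,V_\lambda^+(y)\,dy$ using the Dirichlet Green's function $G_{\Sigma_\lambda}$ for $-\Delta$ on the half-space, so that $\widetilde w \geq 0$ and $-\Delta \widetilde w = V_\lambda^+$. Then $\widetilde w - w_\lambda$ is weakly superharmonic on $\Sigma_\lambda^*$ because $-\Delta(\widetilde w - w_\lambda) = V_\lambda^- \geq 0$; it vanishes on $T_\lambda$ and decays at infinity. Since the hypothesis $\mathop{Cap_\Delta}(Z_\lambda) = 0$, together with Lemma~\ref{cap0} and the Sobolev embedding $H^2\hookrightarrow H^1$, implies that $Z_\lambda$ has zero Newton capacity and is therefore removable for superharmonic functions, the maximum principle yields $w_\lambda^+ \leq \widetilde w$ on $\Sigma_\lambda^*$. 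Using $G_{\Sigma_\lambda}(x,y) \leq c_N|x-y|^{2-N}$ and the Hardy--Littlewood--Sobolev inequality (whose exponents check: $\tfrac12 - \tfrac{1}{2^{**}} = \tfrac{2}{N}$), one gets $\|\widetilde w\|_{L^{2^{**}}} \lesssim \|V_\lambda^+\|_{L^2}$, which is \eqref{w1}.

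For \eqref{w2}, Green's representation in the same half-space yields $V_\lambda(x) = \int_{\Sigma_\lambda} G_{\Sigma_\lambda}(x,y)(u^p - u_\lambda^p)(y)\,dy$. Dropping the nonnegative contribution from $(u^p - u_\lambda^p)^-$ produces the pointwise bound $V_\lambda^+(x) \leq \int_{\Sigma_\lambda^{*,+}} G_{\Sigma_\lambda}(x,y)(u^p - u_\lambda^p)(y)\,dy$, where the support reduces to $\Sigma_\lambda^{*,+}$ since $(u^p - u_\lambda^p)^+$ is supported there. On $\Sigma_\lambda^{*,+}$ the mean value theorem (using $u > u_\lambda \geq 0$ and $p > 1$) gives $u^p - u_\lambda^p \leq p\,u^{p-1} w_\lambda^+$. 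Then the HLS inequality $\|I_2 * f\|_{L^2} \lesssim \|f\|_{L^{2N/(N+4)}}$ (again $\tfrac{N+4}{2N} - \tfrac12 = \tfrac{2}{N}$) combined with a H\"older split on the product $u^{p-1}w_\lambda^+$ with exponents $\alpha = (N+4)/8$ and $\beta = (N+4)/(N-4)$, which are conjugate precisely because $p-1 = 8/(N-4)$, redistributes the two factors as $\|u\|_{L^{2^{**}}(\Sigma_\lambda^{*,+})}^{p-1}\|w_\lambda^+\|_{L^{2^{**}}}$ and yields exactly the right-hand side of \eqref{w2}.

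The principal technical obstacle is that the weak formulation only admits test functions vanishing near $Z_\lambda$, whereas Green's representation and the comparison above need the equations to be read across $Z_\lambda$. This is where the family of cutoffs $\eta_\varepsilon$ constructed through \eqref{eta}--\eqref{eta1} enters: the integral identities and the superharmonic comparison are first carried out on $\Sigma_\lambda\setminus B_\varepsilon^\lambda$ by testing with $\varphi\,\eta_\varepsilon^2$, and the commutator errors produced by $|\nabla \eta_\varepsilon|^2$ and $|\Delta \eta_\varepsilon|^2$, weighted against the locally integrable densities $u^{2^{**}}$ and $|\Delta u|^2$, vanish in the limit $\varepsilon \to 0$ by the $H^2$-capacity-zero assumption. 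This removable-singularity step is what promotes the pointwise Green's bounds from $\Sigma_\lambda\setminus B_\varepsilon^\lambda$ to $\Sigma_\lambda^*$, and hence gives \eqref{w1} and \eqref{w2}.
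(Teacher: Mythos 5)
The central gap is in your treatment of \eqref{w2}. Your argument presupposes the global Green representation $-\Delta w_\lambda(x)=\int_{\Sigma_\lambda}G_{\Sigma_\lambda}(x,y)\bigl(u^{p}-u_\lambda^{p}\bigr)(y)\,dy$ on all of $\Sigma_\lambda$, but the equation $-\Delta(-\Delta w_\lambda)=u^{p}-u_\lambda^{p}$ is only available in $\Sigma_\lambda^{\ast}$, i.e.\ tested against functions vanishing near $Z_\lambda$. Nothing in the hypotheses makes $u_\lambda^{p}$ locally integrable near $Z_\lambda$ (that would amount to $u^{p}\in L^1_{loc}$ near $Z$, which is not assumed for a weak solution), so the integral in your representation need not even be defined; and even granting integrability you would still have to exclude a harmonic correction, which requires control of $-\Delta w_\lambda=\Delta u_\lambda-\Delta u$ near $Z_\lambda$ and at infinity. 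Near $Z_\lambda$ the quantity $\Delta u_\lambda$ is completely uncontrolled --- the paper explicitly does \emph{not} assume $-\Delta u\ge 0$ --- so neither a representation nor a one-sided superharmonic comparison for $-\Delta w_\lambda$ can be pushed across $Z_\lambda$. Your closing remark that the cutoffs $\eta_\varepsilon$ ``promote the pointwise Green's bounds to $\Sigma_\lambda^{\ast}$'' does not work: the capacity cutoffs only kill integral commutator errors, they cannot produce a pointwise inequality for a function with no one-sided bound near $Z_\lambda$. This is precisely why the paper proves \eqref{w2} by duality: it tests the two weak formulations with $\psi_k\eta_\varepsilon$, where $\psi_k$ is the half-space Green potential of a bounded, compactly supported truncation of $(-\Delta w_\lambda)^{+}$, so that $u^{p}-u_\lambda^{p}$ is only ever integrated against the nonnegative $\psi_k$ away from $Z_\lambda$, the sign information enters through $\psi_k\ge 0$, and the finiteness of $\|(-\Delta w_\lambda)^{+}\|_{L^2(\Sigma_\lambda^{\ast})}$ comes out of the truncation and monotone convergence rather than being presupposed (your assertion that $V_\lambda\in L^2(\Sigma_\lambda^{\ast})$ already follows from the hypotheses is not correct --- establishing that bound is part of the content of \eqref{w2}).

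For \eqref{w1} your comparison $w_\lambda\le\widetilde w$ is closer to salvageable: near $Z_\lambda$ the superharmonic function $\widetilde w-w_\lambda\ge -u$ is locally bounded below, and zero Newtonian capacity does follow from \eqref{capa0}, though not by ``Sobolev embedding $H^2\hookrightarrow H^1$'' (the biharmonic capacity controls only $\|\Delta\varphi\|_{L^2}$; one needs the Gagliardo--Nirenberg/H\"older argument on a bounded neighborhood, exactly as in the paper's construction of $\eta_\varepsilon$). The unproved step is the behavior at infinity: $u\in L^{2^{\ast\ast}}(\Sigma_\lambda)$ gives no pointwise decay, and without a condition at infinity the half-space maximum principle fails (e.g.\ $c(\lambda-x_1)$ is harmonic, vanishes on $T_\lambda$, and is negative in $\Sigma_\lambda$). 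This can be repaired --- for instance $(\widetilde w-w_\lambda)^{-}\le w_\lambda^{+}\le u$ is a nonnegative subharmonic function in $L^{2^{\ast\ast}}(\Sigma_\lambda)$ vanishing on $T_\lambda$, and the mean value inequality over large balls forces it to vanish --- but as written ``decays at infinity'' is a gap, and it is exactly the kind of pointwise information the paper's integral duality proof (pairing $w_\lambda$ with Green potentials of truncations of $(w_\lambda^{+})^{\frac{N+4}{N-4}}$) is designed to avoid. Your HLS/H\"older exponent bookkeeping in both steps is correct and matches the paper's.
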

\begin{proof}
	 For any functions $f\in L^{2}(\Sigma_{\lambda})$, we define
	\begin{equation}\label{f2}
		\psi(x):=c_{N}\int_{\Sigma_{\lambda}}\left( \frac{1}{|x-y|^{N-2}}-\frac{1}{|x-y^{\lambda}|^{N-2}}\right) f(y)dy, \sp x\in\Sigma_{\lambda}
	\end{equation}
	with $c_{N}=((N-2)|\mathbb{S}^{N-1}|)^{-1}$, then $\psi\in H^{2}(\Sigma_{\lambda})\cap W^{1,\frac{2N}{N-2}}_{0}(\Sigma_{\lambda})$ and $-\Delta\psi=f$ in $\Sigma_{\lambda}$, where $ H^{2}(\Sigma_{\lambda})$ and $ W^{1,\frac{2N}{N-2}}_{0}(\Sigma_{\lambda})$ is the completion of $C_{c}^{\infty}(\Sigma_{\lambda})$ with respect to $\|\Delta\psi\|_{L^{2}(\Sigma_{\lambda})}$ and $\|\nabla\psi\|_{L^{\frac{2N}{N-2}}(\Sigma_{\lambda})}$. Using the Hardy-Littlewood-Sobolev inequality to \eqref{f2}, we get
	\begin{equation}\label{hls}
		\|\psi\|_{L^{r}(\Sigma_{\lambda})}\leq C(r,N)\|f\|_{L^{s}(\Sigma_{\lambda})},
	\end{equation}
where $1<s<r<\infty$ and $1+\frac{1}{r}=\frac{N-2}{N}+\frac{1}{s}$.
	First, we prove the inequality \eqref{w1}. Denote
	\begin{equation}\label{f3}
		f_k(x)=\begin{cases}(w_{\lambda}^{+})^{\frac{N+4}{N-4}}(x) \chi_{\left\{w_{\lambda}^{+} \leq k\right\}}(x) \chi_{\left\{\operatorname{dist}(x,  Z_{\lambda}) \geq k^{-1},|x| \leq k\right\}}(x), & x\in\Sigma_{\lambda}^{\ast}\\
			0, & x\in Z_{\lambda}\end{cases}
	\end{equation}
	$k\in\mathbb{N}$, then $f_{k}$ is nonnegative, bounded and  has compact support, which implies $f_{k}\in L^{2}(\Sigma_{\lambda})$. Thus,
	the corresponding $\psi_{k}$ related to $f_{k}$, which is defined by \eqref{f2} satisfies $-\Delta\psi_{k}=f_{k}=0$ in the neighborhood $B_{\frac{1}{k}}^{\lambda}$ of $ Z_{\lambda}$.
	Since $f_{k}$ is nondecreasing with respect to $k$, it follows by the monotone convergence theorem that
	\begin{equation}\label{w0}
		\int_{\Sigma_{\lambda}^{\ast}} (w_{\lambda}^{+})^{\frac{2 N}{N-4}}dx=	\int_{\Sigma_{\lambda}^{\ast}} w_{\lambda}(w_{\lambda}^{+})^{\frac{ N+4}{N-4}}dx=\lim _{k \rightarrow \infty} \int_{\Sigma_\lambda^{*}} w_{\lambda} f_kdx=\lim _{k \rightarrow \infty} \int_{\Sigma_\lambda^{*}} w_{\lambda}\left(-\Delta \psi_k\right)dx .
	\end{equation}

\medskip
	
	Next we apply the nonnegative cutoff functions $\eta_{\varepsilon}$ as \eqref{eta} to estimate the integral $\int_{\Sigma^*_\lambda} w_{\lambda}\left(-\Delta \psi_k\right)dx$. Claim that
	\begin{equation}\label{w3}
		\lim_{\varepsilon \rightarrow0}\int_{\Sigma_\lambda^{*}} w_{\lambda}\left(-\Delta\left(\psi_k \eta_{\varepsilon}\right)\right)dx=\int_{\Sigma_\lambda^{*}}w_{\lambda}\left(-\Delta\psi_k\right)dx.
	\end{equation}
	For fixed $k$, let $\varepsilon>0$ so small that $B_{\varepsilon}^{\lambda}\subset B_{\frac{1}{k}}^{\lambda}$. It is noted that $-\Delta\psi_{k}=0$ in $B_{\frac{1}{k}}^{\lambda}$, and the boundary of $B_{\frac{1}{k}}^{\lambda}$ defined by $\partial B_{\frac{1}{k}}^{\lambda}:=\left\lbrace x\in\R^N\ | \ \operatorname{dist}(x, Z_{\lambda})=\frac{1}{k}\right\rbrace $ is a smooth embedded hypersurface in $\R^N$, then $\psi_{k}\in C^{\infty}_{loc}(B_{\frac{1}{k}}^{\lambda})$ by harmonicity. Therefore, we get that $\psi_{k}$ and $\nabla\psi_{k}$ are bounded in $B_{\varepsilon}^{\lambda}$. By direct calculation, we have
	\begin{equation}\label{w7}
		\int_{\Sigma_\lambda^{*}} w_{\lambda}\Delta\left(\psi_k \eta_{\varepsilon}\right)dx=\int_{\Sigma_\lambda^{*}}w_{\lambda} \eta_{\varepsilon}\Delta \psi_kdx
		+ \int_{\Sigma_\lambda^{*}}w_{\lambda} \psi_k\Delta\eta_{\varepsilon}dx
		+\int_{\Sigma_\lambda^{*}}2w_{\lambda} \nabla\eta_{\varepsilon} \nabla\psi_kdx.
	\end{equation}
	For the first term on the right side, since $-\Delta\psi_{k}=0$ in $B_{\varepsilon}^{\lambda}$ and $\eta_{\varepsilon}=1$ in $\R^N\setminus B_{\varepsilon}^{\lambda}$,  we have
	\begin{equation*}
		\int_{\Sigma_\lambda^{*}}w_{\lambda} \eta_{\varepsilon}\Delta \psi_kdx=	\int_{\Sigma_\lambda^{*}}w_{\lambda} \Delta \psi_kdx.
	\end{equation*}
	For the latter two terms on the right side, by using the H\"{o}lder inequality and the boundedness of $\psi_{k}$ and $\nabla\psi_{k}$, we obtain
	\begin{equation*}
		\left|\int_{\Sigma_\lambda^{*}}w_{\lambda} \psi_k\Delta\eta_{\varepsilon}dx\right|
		\leq C\int_{B_{\varepsilon}^{\lambda}\setminus B_{\delta}^{\lambda}}|w_{\lambda}|| \Delta\eta_{\varepsilon}|dx
		\leq C\left(  \mathcal{L}^{N}(B_{\varepsilon}^{\lambda})\right)^{\frac{4}{2N}}\|w_{\lambda}\|_{L^{2^{\ast\ast}}(B_{\varepsilon}^{\lambda}\setminus B_{\delta}^{\lambda})} \|\Delta\eta_{\varepsilon}\|_{L^{2}(B_{\varepsilon}^{\lambda}\setminus B_{\delta}^{\lambda})}
	\end{equation*}
	and
	\begin{equation*}
		\left|\int_{\Sigma_\lambda^{*}}w_{\lambda} \nabla\eta_{\varepsilon} \nabla\psi_kdx\right|\leq C\int_{B_{\varepsilon}^{\lambda}\setminus B_{\delta}^{\lambda}}|w_{\lambda}|| \nabla\eta_{\varepsilon}|dx
		\leq C\left(  \mathcal{L}^{N}(B_{\varepsilon}^{\lambda})\right)^{\frac{4}{2N}}\|w_{\lambda}\|_{L^{2^{\ast\ast}}(B_{\varepsilon}^{\lambda}\setminus B_{\delta}^{\lambda})} \|\nabla\eta_{\varepsilon}\|_{L^{2}(B_{\varepsilon}^{\lambda}\setminus B_{\delta}^{\lambda})}.
	\end{equation*}
	Therefore, combining the above three integrations and taking $\varepsilon\rightarrow 0$ in \eqref{w7}, this means that \eqref{w3} holds.

\medskip

	Since $\psi_{k}=0$ and $w_{\lambda}=0$ on $T_{\lambda}$, integrating by parts and using the trace theorem, we get
	\begin{equation}\label{w4}
		\lim_{\varepsilon\rightarrow0}\int_{\Sigma_\lambda^{*}} w_{\lambda}\left(-\Delta\left(\psi_k \eta_{\varepsilon}\right)\right)dx=\lim_{\varepsilon\rightarrow0}\int_{\Sigma_\lambda^{*}}(-\Delta w_{\lambda})\left(\psi_k \eta_{\varepsilon}\right)dx \leq\int_{\Sigma_\lambda^{*}}(-\Delta w_{\lambda})^{+}\psi_k dx.
	\end{equation}
	Then  by \eqref{w3} and \eqref{w4}, it follows that
	\begin{equation}\label{w5}
		\begin{aligned}
			\int_{\Sigma_\lambda^{*}} w_{\lambda}\left(-\Delta\psi_k\right)dx &\leq\int_{\Sigma_\lambda^{\ast}}(-\Delta w_{\lambda})^{+}\psi_k dx\\
			&\leq	\|(-\Delta w_{\lambda})^{+}\|_{L^{2}(\Sigma_{\lambda}^{\ast})}\|\psi_{k}\|_{L^{2}(\Sigma_{\lambda}^{\ast})}\\
			&\leq 	\|(-\Delta w_{\lambda})^{+}\|_{L^{2}(\Sigma_{\lambda}^{\ast})}\|f_{k}\|_{L^{\frac{2N}{N+4}}(\Sigma_{\lambda}^{\ast})}.
		\end{aligned}
	\end{equation}
	The second inequality holds by using the H\"older inequality and the third one is \eqref{hls} with $s=\frac{2N}{N+4}$ and $r=2$. Combining with \eqref{w0}, we have
	\begin{equation}
		\int_{\Sigma_{\lambda}^{\ast}} (w_{\lambda}^{+})^{\frac{2 N}{N-4}}dx \leq  \|(-\Delta w_{\lambda})^{+}\|_{L^{2}(\Sigma_{\lambda}^{\ast})}\|w_{\lambda}^{+}\|_{L^{\frac{2N}{N-4}}(\Sigma_{\lambda}^{\ast})}^{\frac{N+4}{N-4}},
	\end{equation}
	as $k\rightarrow\infty$ in \eqref{w5}. Hence, \eqref{w1} holds, since $\|w_{\lambda}^{+}\|_{L^{\frac{2N}{N-4}}(\Sigma_{\lambda}^{\ast})}<\infty$.

\medskip
	
	By a similar argument as the proof of \eqref{w1}, for $k\in\mathbb{N}$,  set
	\begin{equation}\label{f4}
		f_k(x)=\begin{cases} (-\Delta w_{\lambda})^{+}(x) \chi_{\left\{(-\Delta w_{\lambda})^{+} \leq k\right\}}(x) \chi_{\left\{\operatorname{dist}(x,  Z_{\lambda}) \geq k^{-1},|x| \leq k\right\}}(x), & x\in\Sigma_{\lambda}^{\ast}\\
			0, & x\in Z_{\lambda}\end{cases}
	\end{equation}
	which implies $f_{k}$ is nonnegative, bounded and  has compact support, $f_{k}\in L^{2}(\Sigma_{\lambda})$. Thus,
	we also have that the corresponding $\psi_{k}$ related to $f_{k}$, which is defined by \eqref{f2} verfies $-\Delta\psi_{k}=f_{k}=0$ in the neighborhood $B_{\frac{1}{k}}^{\lambda}$ of $ Z_{\lambda}$.
	Since $f_{k}$ is nondecreasing with respect to $k$, by the monotone convergence theorem, we get
	\begin{equation}\label{w6}
		\int_{\Sigma_{\lambda}^{\ast}}  \left( (-\Delta w_{\lambda})^{+}\right) ^{2}dx=	\int_{\Sigma_{\lambda}^{\ast}} (-\Delta w_{\lambda})(-\Delta w_{\lambda})^{+}dx=\lim _{k \rightarrow \infty} \int_{\Sigma_\lambda^{*}} (-\Delta w_{\lambda}) f_kdx=\lim _{k \rightarrow \infty} \int_{\Sigma_\lambda^{*}} \Delta w_{\lambda}\Delta \psi_kdx .
	\end{equation}
As $\psi_{k}\in H^{2}(\Sigma_{\lambda})\cap W^{1,\frac{2N}{N-2}}_{0}(\Sigma_{\lambda})$, by density arguments, we plug $\varphi:=\psi_{k}\eta_{\varepsilon}\chi_{\Sigma_{\lambda}}$ as test function in \eqref{u0} and in \eqref{u1}, then subtracting, we get
	\begin{equation}\label{u3}
		\int_{\Sigma_{\lambda}^{*}}\Delta w_{\lambda}  \Delta\left(\psi_{k}\eta_{\varepsilon}\right)  dx=	\int_{\Sigma_{\lambda}^{*}}\left( u^{\frac{N+4}{N-4}}-u_{\lambda}^{\frac{N+4}{N-4}} \right) \psi_{k}\eta_{\varepsilon} dx.
	\end{equation}
	Arguing as \eqref{w7}, by the H\"older inequality, the left side of the above equality is that \begin{equation*}
		\begin{aligned}
			\int_{\Sigma_\lambda^{*}} \Delta w_{\lambda}\Delta\left(\psi_k \eta_{\varepsilon}\right)dx
			=&\int_{\Sigma_\lambda^{*}}(\Delta w_{\lambda}) \eta_{\varepsilon}\Delta \psi_kdx
			+ \int_{\Sigma_\lambda^{*}}(\Delta w_{\lambda}) \psi_k\Delta\eta_{\varepsilon}dx
			+\int_{\Sigma_\lambda^{*}}2\Delta w_{\lambda} \nabla\eta_{\varepsilon} \nabla\psi_kdx.\\
		\end{aligned}
	\end{equation*}
Meanwhile, we have
\begin{equation*}
\int_{\Sigma_\lambda^{*}}(\Delta w_{\lambda}) \psi_k\Delta\eta_{\varepsilon}dx\leq C\|\Delta w_{\lambda}\|_{L^{2}(B_{\varepsilon}^{\lambda}\setminus B_{\delta}^{\lambda})} \|\Delta\eta_{\varepsilon}\|_{L^{2}(B_{\varepsilon}^{\lambda}\setminus B_{\delta}^{\lambda})},
\end{equation*}
and
\begin{equation*}
			\int_{\Sigma_\lambda^{*}}\Delta w_{\lambda} \nabla\eta_{\varepsilon} \nabla\psi_kdx\leq C\|\Delta w_{\lambda}\|_{L^{2}(B_{\varepsilon}^{\lambda}\setminus B_{\delta}^{\lambda})} \|\nabla\eta_{\varepsilon}\|_{L^{2}(B_{\varepsilon}^{\lambda}\setminus B_{\delta}^{\lambda})},
\end{equation*}
	from which we can conclude that
	\begin{equation}\label{eta2}
		\lim\limits_{\varepsilon\rightarrow 0}\int_{\Sigma_{\lambda}^{*}}\Delta w_{\lambda}  \Delta\left(\psi_{k}\eta_{\varepsilon}\right)  dx=	\int_{\Sigma_{\lambda}^{*}}\Delta w_{\lambda}  \Delta\psi_{k}  dx.
	\end{equation}
	 From \eqref{eta2}, letting $\varepsilon\rightarrow 0$ in \eqref{u3}, we have
	\begin{equation*}
		\begin{aligned}
			\int_{\Sigma_{\lambda}^{*}}\Delta w_{\lambda}  \Delta\psi_{k}dx&=	\int_{\Sigma_{\lambda}^{\ast}}\left( u^{\frac{N+4}{N-4}}-u_{\lambda}^{\frac{N+4}{N-4}} \right) \psi_{k}dx\\
			&\leq C \int_{\Sigma_{\lambda}^{\ast}}u^{\frac{8}{N-4}}w_{\lambda}^{+} \psi_{k}dx\\
			&\leq\|u\|_{L^{2^{\ast\ast}}(\Sigma_{\lambda}^{\ast,+})}^{\frac{8}{N-4}}	\| w_{\lambda}^{+}\|_{L^{2^{\ast\ast}}(\Sigma_{\lambda}^{\ast})}\|\psi_{k}\|_{L^{\frac{2N}{N-4}}(\Sigma_{\lambda}^{\ast})}\\
			&\leq \|u\|_{L^{2^{\ast\ast}}(\Sigma_{\lambda}^{\ast,+})}^{\frac{8}{N-4}}	\| w_{\lambda}^{+}\|_{L^{2^{\ast\ast}}(\Sigma_{\lambda}^{\ast})}\|f_{k}\|_{L^{2}(\Sigma_{\lambda}^{\ast})},
		\end{aligned}
	\end{equation*}
	where $\Sigma_{\lambda}^{\ast,+}=\left\lbrace x\in\Sigma_{\lambda}^{\ast}\ |\ w_{\lambda}^{+}>0 \right\rbrace $ and we also use the H\"older inequality and \eqref{hls} with $s=2$ and $r=\frac{2N}{N-4}$. Then from \eqref{w6}, as $k\rightarrow\infty$, we deduce  that
	\begin{equation*}
		\int_{\Sigma_{\lambda}^{\ast}}  \left( (-\Delta w_{\lambda})^{+}\right) ^{2}dx\leq \|u\|_{L^{2^{\ast\ast}}(\Sigma_{\lambda}^{\ast,+})}^{\frac{8}{N-4}}	\| w_{\lambda}^{+}\|_{L^{2^{\ast\ast}}(\Sigma_{\lambda}^{\ast})}\|(-\Delta w_{\lambda})^{+}\|_{L^{2}(\Sigma_{\lambda}^{\ast})}
	\end{equation*}
	which implies \eqref{w2} holds. The proof is complete.
\end{proof}

\medskip

\textbf{Proof of Theorem \ref{symm}.} We will split the proof in two steps.\\
\textit{Step 1.} For $\lambda<0$ sufficiently negative, we have $w_{\lambda}\leq 0$ in $\Sigma_{\lambda}^{\ast}$.

From \eqref{w1} and \eqref{w2}, we easily get that
\begin{equation*}
	\|w_{\lambda}^{+}\|_{L^{2^{\ast\ast}}(\Sigma_{\lambda}^{\ast})}\leq 	\|(-\Delta w_{\lambda})^{+}\|_{L^{2}(\Sigma_{\lambda}^{\ast})}\leq \|u\|_{L^{2^{\ast\ast}}(\Sigma_{\lambda}^{\ast})}^{\frac{8}{N-4}}	\| w_{\lambda}^{+}\|_{L^{2^{\ast\ast}}(\Sigma_{\lambda}^{\ast})}.	
\end{equation*}	
  Due to the integrability of $u(x)$, letting $\lambda\rightarrow-\infty$, we have $\|u\|_{L^{2^{\ast\ast}}(\Sigma_{\lambda}^{\ast})}^{\frac{8}{N-4}}<1$, which implies $	\|w_{\lambda}^{+}\|_{L^{2^{\ast\ast}}(\Sigma_{\lambda}^{\ast})}=0$. Thus, we obtain $w_{\lambda}\leq 0$ in $\Sigma_{\lambda}^{\ast}$. Now we can begin to move the plane $T_{\lambda}$ from $-\infty$ to the right as long as  $w_{\lambda}\leq 0$ in $\Sigma_{\lambda}^{\ast}$. Denote
$$\lambda_{0}=\sup\left\lbrace \lambda< 0\ |  \ w_{\mu}\leq 0 \ \text{in}\ \Sigma_{\mu}^{\ast},  \mu\leq\lambda\right\rbrace. $$
\textit{Step 2.} We prove $\lambda_{0}=0$.

Argue by contradiction and suppose that $\lambda_{0}<0$. Then we have $w_{\lambda_{0}}\leq 0$ in $\Sigma_{\lambda_{0}}^{\ast}$ by continuity.  Claim that $w_{\lambda_{0}}< 0$ in $\Sigma_{\lambda_{0}}^{\ast}$. In fact, if $w_{\lambda_{0}} \equiv0$ in $\Sigma_{\lambda_{0}}^{\ast}$, then $u$ would be singular somewhere on $ Z_{\lambda}$, which is impossible since $\lambda_{0}<0$ and $u$ would be continuous on $Z_{\lambda}$.  By \eqref{w2} and  $w_{\lambda_{0}}^{+}= 0$ in $\Sigma_{\lambda_{0}}^{\ast}$, we know that	$\|(-\Delta w_{\lambda})^{+}\|_{L^{2}(\Sigma_{\lambda}^{\ast})}=0$, then it follows  $-\Delta w_{\lambda_{0}}\leq 0$ in $\Sigma_{\lambda_{0}}^{\ast}$. Notice that $w_{\lambda_{0}} \not\equiv0$ in $\Sigma_{\lambda_{0}}^{\ast}$,  we have
\begin{equation}\label{ww0}
	 w_{\lambda_{0}}< 0\sp\text{in}~\Sigma_{\lambda_{0}}^{\ast},
\end{equation}  thanks to the strong maximum principle. Next we will show that there exists a $\tau>0$ such that $ w_{\lambda}\leq  0$ in $\Sigma_{\lambda}^{\ast}$, for any $\lambda\in\left[ \lambda_{0}, \lambda_{0}+\tau\right)$.
Since $u\in L^{2^{\ast\ast}}(\Sigma_{\lambda_{0}})$, for any $\sigma>0$, there exist $\tau_{1}=\tau_{1}(\sigma,\lambda_{0})>0$ such that $\lambda_{0}+\tau_{1}<0$ and a compact set  $K\subset\Sigma_{\lambda}^{\ast}$, here $\lambda\in\left[ \lambda_{0},\lambda_{0}+\tau_{1}\right]$, such that
\begin{equation}\label{ww2}
		\int_{\Sigma_{\lambda_{0}}^{\ast}\setminus K} u^{2^{\ast\ast}}dx<\frac{\sigma}{2}.
\end{equation}
By the continuity of $w_{\lambda}(x)$ on $K\times \left[ \lambda_{0},\lambda_{0}+\tau_{1}\right]$ and \eqref{ww0}, there exist $\tau_{2}\in(0,\tau_{1})$ such that $w_{\lambda}<0$ in $K\subset\Sigma_{\lambda}^{\ast}$, for any $\lambda\in\left[ \lambda_{0},\lambda_{0}+\tau_{2}\right] $.
 By \eqref{ww2} and $u\in L^{2^{\ast\ast}}(\Sigma_{\lambda_{0}+\tau_{2}})$, we can choose $\tau\in(0,\tau_{2})$ such that
\begin{equation*}
	\int_{\Sigma_{\lambda}^{\ast}\setminus K} u^{2^{\ast\ast}}dx<\sigma,
\end{equation*}
for every $\lambda\in\left[ \lambda_{0},\lambda_{0}+\tau\right)$. We observe that $w_{\lambda}^{+}=0$ in $K$ for any $\lambda\in\left[ \lambda_{0},\lambda_{0}+\tau\right)$, together with Lemma \ref{lem1}, there holds
\begin{equation}\label{ww1}
	\|w_{\lambda}^{+}\|_{L^{2^{\ast\ast}}(\Sigma_{\lambda}^{\ast}\setminus K)}\leq \|u\|_{L^{2^{\ast\ast}}(\Sigma_{\lambda}^{\ast}\setminus K)}^{\frac{8}{N-4}}	\| w_{\lambda}^{+}\|_{L^{2^{\ast\ast}}(\Sigma_{\lambda}^{\ast}\setminus K)}.	
\end{equation}	
We choose $\sigma>0$ sufficiently small such that
\begin{equation*}
	\|u\|_{L^{2^{\ast\ast}}(\Sigma_{\lambda}^{\ast}\setminus K)}^{\frac{8}{N-4}}<1,
\end{equation*}
by \eqref{ww1}, it follows $ w_{\lambda}\leq  0$ in $\Sigma_{\lambda}^{\ast}$, for any $\lambda\in\left[ \lambda_{0}, \lambda_{0}+\tau\right)$. This is a contradiction
to the definition of $\lambda_{0}$.
Therefore, we obtain the desired symmetry of $u$. The proof is complete.
$\hfill{} \Box$

\medskip

The proof of Theorem \ref{symm2} follows from Theorem \ref{symm}.

\textbf{Proof of Theorem \ref{symm2}.}
 It suffices to show the symmetry of $u$ under the assumption $z_{0}=0$, that is $0\notin Z$ and
\begin{equation}\label{capk0}
\mathop{Cap_{\Delta}}_{\R^N}(\mathcal{K}(Z))=0,
\end{equation}
where $\mathcal{K}(Z)=\left\lbrace \frac{x}{|x|^{2}},\ x\in Z\right\rbrace $. Indeed, fixing $0\neq z_0\in \left\lbrace x_{1}=0\right\rbrace\setminus Z$, if $u$ is a positive weak solution of \eqref{main}, then $u_{z_0}(x):=u(x+z_0)$ is a solution of 	\begin{equation*}
	\Delta^{2} u=u^{\frac{N+4}{N-4}}
	\quad
	\quad \text{in} \ \mathbb{R}^{N}\setminus \left\lbrace -z_{0}\right\rbrace +Z.
\end{equation*}
We observe that $0\notin -z_{0}+ Z$ and $-z_{0}+ Z$ is a closed and proper subset of the hyperplane $ \left\lbrace x_{1}=0\right\rbrace $ satisfies $$\mathop{Cap_{\Delta}}_{\R^N}(\mathcal{K}(-z_{0}+ Z))=0.$$
Thus, without loss of generality, we assume that $z_{0}=0$ and $u$ is a positive weak solution of \eqref{main}. Now consider the $C^{2}$-diffeomorphism map $\mathcal{K}:\R^N\setminus\left\lbrace 0\right\rbrace \rightarrow \R^N\setminus\left\lbrace 0\right\rbrace$ defined by $\mathcal{K}(x):=\frac{x}{|x|^{2}}$,
then the Kelvin transformation for the solution $u$
$$v(y)=|x|^{N-4}u(x),\quad y=\frac{x}{|x|^{2}},\quad x\in \R^{N}\setminus \left\lbrace { Z\cup\left\lbrace0 \right\rbrace }\right\rbrace  $$ weakly satisfies the equation
	\begin{equation}\label{main2}
	\Delta^{2} v=v^{\frac{N+4}{N-4}}
	\quad
	\quad \text{in} \ \mathbb{R}^{N}\setminus \left\lbrace {\mathcal{K}( Z)\cup\left\lbrace0 \right\rbrace }\right\rbrace.
\end{equation}
Since $0\notin Z$ and $ Z$ is a proper subset of the hyperplane $ \left\lbrace x_{1}=0\right\rbrace $, then we see that $\mathcal{K}( Z)\subset\left\lbrace x_{1}=0\right\rbrace$ is a bounded subset. Moreover, we have $\mathcal{K}( Z)$ is a compact set since $ Z$ is closed and $\mathcal{K}$ is $C^{2}$-diffeomorphism.  For simplicity, we denote $ Z_{0}:=\left\lbrace {\mathcal{K}( Z)\cup\left\lbrace0 \right\rbrace }\right\rbrace$, by \eqref{capk0}, then $ Z_{0}$ is a compact subset of the hyperplane $ \left\lbrace x_{1}=0\right\rbrace $ with $\mathop{Cap_{\Delta}}\limits_{\R^N}(Z_{0})=0$.

In order to apply the method of moving plane to $v$, we prove that $v$ satisfies assumptions of Theorem 2.1.
 \begin{lem}\label{v}
 	 Assume that $u\in H^{2}_{loc}(\R^{N}\setminus  Z)$, then we have $v\in H^{2}_{loc}(\R^{N}\setminus  Z_{0})$. Moreover, there holds $v\in L^{2^{\ast\ast}}(\Sigma)$ with $2^{\ast\ast}=\frac{2N}{N-4}$ and $\Delta v\in L^{2}(\Sigma)$ for every $\Sigma$ which is a positive distance away from $ Z_{0}$.
 \end{lem}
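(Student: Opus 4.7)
The plan is to transport properties of $u$ through the Kelvin map $\mathcal{K}(y)=y/|y|^2$, a $C^\infty$ involutive diffeomorphism of $\R^N\setminus\{0\}$. For the local $H^2$-regularity of $v=|y|^{4-N}u\circ\mathcal{K}$, I would observe that any compact $K\subset\R^N\setminus Z_0$ is bounded and bounded away from $0$, so $\mathcal{K}(K)$ is a compact subset of $\R^N\setminus(Z\cup\{0\})\subset\R^N\setminus Z$, on which $u\in H^2$. Composing with the smooth diffeomorphism $\mathcal{K}$ (whose derivatives are uniformly bounded on $K$) and multiplying by the smooth bounded weight $|y|^{4-N}$ preserves $H^2(K)$ by the chain and Leibniz rules, giving $v\in H^2_{loc}(\R^N\setminus Z_0)$.

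Next, fix $\Sigma$ with $\mathrm{dist}(\Sigma,Z_0)\geq\delta>0$. Using the identity $|\mathcal{K}(a)-\mathcal{K}(b)|=|a-b|/(|a|\,|b|)$ together with $\mathrm{dist}(0,Z)>0$ (since $0\notin Z$ and $Z$ is closed), a short contradiction argument shows that $\overline{\mathcal{K}(\Sigma)}$ is a bounded closed subset of $\R^N\setminus Z$, so $u\in H^2(\overline{\mathcal{K}(\Sigma)})$. The conformal invariance of the critical norm then yields directly
\[
\int_\Sigma v^{2^{**}}\,dy=\int_{\mathcal{K}(\Sigma)}u^{2^{**}}\,dx,
\]
which is finite by the Sobolev embedding $H^2\hookrightarrow L^{2^{**}}$.

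For $\Delta v\in L^2(\Sigma)$, the key step will be to derive the pointwise identity
\[
\Delta v(y)=-2(N-4)|y|^{2-N}u(x)-4|y|^{-N}\,y\cdot\nabla u(x)+|y|^{-N}\Delta u(x),\qquad x=\mathcal{K}(y),
\]
using the product rule together with $\Delta|y|^{4-N}=2(4-N)|y|^{2-N}$, the conformal identity $D\mathcal{K}^{\top}D\mathcal{K}=|y|^{-4}I$, and $\Delta \mathcal{K}_j=-2(N-2)y_j/|y|^4$. Changing variables $dy=|y|^{2N}\,dx$, with $|y|^\alpha=|x|^{-\alpha}$, then transforms $\int_\Sigma|\Delta v|^2\,dy$ into a bounded linear combination of
\[
\int_{\mathcal{K}(\Sigma)}\frac{u^2}{|x|^4}\,dx,\qquad\int_{\mathcal{K}(\Sigma)}\frac{|\nabla u|^2}{|x|^2}\,dx,\qquad\int_{\mathcal{K}(\Sigma)}(\Delta u)^2\,dx.
\]

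The hard part, I expect, is controlling the first two integrals, since when $\Sigma$ is unbounded $\mathcal{K}(\Sigma)$ accumulates at $0$ where the weights $|x|^{-4}$ and $|x|^{-2}$ are singular. To handle this I would fix $\rho>0$ with $B_{2\rho}(0)\subset\R^N\setminus Z$ (possible because $0\notin Z$) and a cutoff $\chi\in C^\infty_c(B_{2\rho})$ with $\chi\equiv1$ on $B_\rho$. Applying the Rellich--Hardy inequality (valid for $N\geq5$) and the classical Hardy inequality componentwise to $\chi u\in H^2_0(B_{2\rho})$ then gives
\[
\int_{B_\rho}\frac{u^2}{|x|^4}\,dx+\int_{B_\rho}\frac{|\nabla u|^2}{|x|^2}\,dx\leq C\|u\|_{H^2(B_{2\rho})}^2<\infty.
\]
On the complement $\mathcal{K}(\Sigma)\setminus B_\rho$ the weights $|x|^{-4},|x|^{-2}$ are bounded and $u\in H^2$ there by the $H^2_{loc}$ assumption, so all three weighted integrals are finite, yielding $\Delta v\in L^2(\Sigma)$ and completing the proof.
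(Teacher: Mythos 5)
Your proposal is correct and follows essentially the same route as the paper: the same pointwise formula for $\Delta v$ under the Kelvin transform, the same change of variables reducing everything to the three weighted integrals $\int_{\mathcal{K}(\Sigma)}|x|^{-4}u^{2}$, $\int_{\mathcal{K}(\Sigma)}|x|^{-2}|\nabla u|^{2}$, $\int_{\mathcal{K}(\Sigma)}|\Delta u|^{2}$, and the same Hardy--Rellich/Hardy estimates with a cut-off near the origin. You merely spell out details the paper compresses (the $H^{2}_{loc}$ transfer and the verification that $\overline{\mathcal{K}(\Sigma)}$ is compact and disjoint from $Z$), which is fine.
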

\begin{proof}
 For every $\Sigma$ which is a positive distance away from $ Z_{0}$ and  $v(y)=\frac{1}{|y|^{N-4}}u(\frac{y}{|y|^{2}})$, $ y\in \Sigma $, we have
 	\begin{equation*}
		\int_{\Sigma}v^{2^{\ast\ast}}(y)dy=\int_{\mathcal{K}(\Sigma)}u^{2^{\ast\ast}}(x)dx<\infty,
	\end{equation*}
by Sobolev embedding since $\mathcal{K}(\Sigma)$ is bounded and a positive distance away from $Z$. Direct calculation shows that
\begin{equation}\label{vk}
	\Delta v(y)=-2(N-4)|y|^{2-N}u\left( \frac{y}{|y|^{2}}\right) -4|y|^{-N}y\cdot\nabla u\left( \frac{y}{|y|^{2}}\right)+|y|^{-N}\Delta u\left( \frac{y}{|y|^{2}}\right).
\end{equation}
Then we have
	\begin{equation*}
	\int_{\Sigma}\left||y|^{2-N}u\left( \frac{y}{|y|^{2}}\right) \right| ^{2}dy=\int_{\mathcal{K}(\Sigma)}|x|^{-4}\left|u\left( x\right) \right| ^{2}dx\leq\int_{\widetilde{\mathcal{K}(\Sigma)}}\left|\Delta u\left( x\right) \right| ^{2}dx<\infty,
\end{equation*}
using the Hardy-Rellich inequality and a basic cut-off function argument, where $\widetilde{\mathcal{K}(\Sigma)}$ is also a positive distance away from $ Z_{0}$ and $\overline{\mathcal{K}(\Sigma)}\subset\widetilde{\mathcal{K}(\Sigma)}$.
Similarly, we have
	\begin{equation*}
		\begin{aligned}
		\int_{\Sigma}\left||y|^{-N}y\cdot \nabla u\left( \frac{y}{|y|^{2}}\right) \right| ^{2}dy
		&\leq	\int_{\Sigma}|y|^{2-2N}\left|\nabla u\left( \frac{y}{|y|^{2}}\right) \right| ^{2}dy\\
		&=\int_{\mathcal{K}(\Sigma)}|x|^{-2}\left|\nabla u\left( x\right) \right| ^{2}dx\\
		&\leq\int_{\widetilde{\mathcal{K}(\Sigma)}}\left|\Delta u\left( x\right) \right| ^{2}dx<\infty.
		\end{aligned}
\end{equation*}

Meanwhile, we also get
	\begin{equation*}
	\int_{\Sigma}|y|^{-2N}\left|\Delta u\left( \frac{y}{|y|^{2}}\right) \right| ^{2}dy=\int_{\mathcal{K}(\Sigma)}\left|\Delta u\left( x\right) \right| ^{2}dx<\infty.
\end{equation*}
Combining \eqref{vk} with these integrals over $\Sigma$, we easily obtain that
	\begin{equation*}
	\int_{\Sigma}\left|\Delta v\left( y\right) \right| ^{2}dy<\infty.
\end{equation*}
\end{proof}
Therefore, by Lemma \ref{v}, we know that the Kelvin transformation $v$ satisfies the assumptions in Theorem \ref{symm}, and $ Z_{0}$ is a compact subset of the hyperplane $ \left\lbrace x_{1}=0\right\rbrace $ with ${\mathop{Cap_{\Delta}}\limits_{\R^N}}( Z_{0})=0$, then it follows that $v$ is symmetric with respect to the  hyperplane $ \left\lbrace x_{1}=0\right\rbrace $. Thus, we obtain the symmetry of $u$.

\medskip

Furthermore, if $Z$ is compact subset, without loss of generality, we also can assume that $0\notin Z$. Indeed, analogous arguments as above, for fixed $z_0\in \left\lbrace x_{1}=0\right\rbrace\setminus Z$, then $u_{z_0}(x):=u(x+z_0)$ is a solution of 	\begin{equation*}
	\Delta^{2} u=u^{\frac{N+4}{N-4}}
	\quad
	\quad \text{in} \ \mathbb{R}^{N}\setminus \left\lbrace -z_{0}+ Z\right\rbrace.
\end{equation*}
In addition, $0\notin -z_{0}+ Z$ and $-z_{0}+ Z$ is a compact subset of the hyperplane $ \left\lbrace x_{1}=0\right\rbrace $ satisfies $$\mathop{Cap_{\Delta}}_{\R^N}(-z_{0}+ Z)=0,$$ by \eqref{capacity0}. We only need to show that 	${\mathop{Cap_{\Delta}}\limits_{\R^N}}(\mathcal{K}( Z))=0$ by the following two lemmas.

\begin{lem}\label{cap1}
	Let $F:\R^N\setminus\left\lbrace 0\right\rbrace \rightarrow \R^N\setminus\left\lbrace 0\right\rbrace$ be a $C^{2}$-diffeomorphism and $A$ be a bounded open set of $\R^N\setminus\left\lbrace 0\right\rbrace$ with Lipschitz boundary. If $C\subset A$  is a compact set such that
	\begin{equation}\label{ca1}
		{\mathop{Cap_{\Delta}}\limits_{A}}(C)=0,
	\end{equation}
	then
	\begin{equation}\label{ca2}
		{\mathop{Cap_{\Delta}}\limits_{F(A)}}(F(C))=0.
	\end{equation}
\end{lem}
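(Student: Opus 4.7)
The plan is to transport a near-optimal test function for $\mathop{Cap_{\Delta}}_{A}(C)$ via composition with $F^{-1}$ to produce an admissible test function for $\mathop{Cap_{\Delta}}_{F(A)}(F(C))$, then show its biharmonic energy is controlled by that of the original up to a constant depending only on $A$ and $F$. Since the original energy can be made arbitrarily small by \eqref{ca1}, this yields \eqref{ca2} at once.

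More concretely, given $\varepsilon>0$ I would pick $\varphi\in C_c^\infty(A)$ with $\varphi\ge 1$ on an open neighborhood $U\supset C$ and $\int_A|\Delta\varphi|^2\,dx<\varepsilon$, and set $\psi(y):=\varphi(F^{-1}(y))$ on $F(A)$, extended by zero. Then $\psi\in C_c^2(F(A))$ and $\psi\ge 1$ on $F(U)$, a neighborhood of $F(C)$. Because $\operatorname{supp}\varphi$ is a compact subset of $\R^N\setminus\{0\}$, the $C^2$-norm of $F^{-1}$ on $F(\operatorname{supp}\varphi)$ is finite, and the chain rule gives
\begin{equation*}
\Delta_y\psi(y)=\sum_{j,k}a_{jk}(y)(\partial_j\partial_k\varphi)(F^{-1}(y))+\sum_j b_j(y)(\partial_j\varphi)(F^{-1}(y)),
\end{equation*}
with $a_{jk},b_j$ uniformly bounded on $F(\operatorname{supp}\varphi)$. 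Changing variables $y=F(x)$ then yields
\begin{equation*}
\int_{F(A)}|\Delta_y\psi|^2\,dy\le C(A,F)\int_A\bigl(|D^2\varphi|^2+|\nabla\varphi|^2\bigr)\,dx.
\end{equation*}

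To reduce the right-hand side to a multiple of $\|\Delta\varphi\|_{L^2(A)}^2$, I would invoke two classical facts. First, Plancherel's identity (equivalently, $L^2$-boundedness of second-order Riesz transforms) gives $\|D^2\varphi\|_{L^2(\R^N)}=\|\Delta\varphi\|_{L^2(\R^N)}$ for $\varphi\in C_c^\infty$. Second, the identity $\|\nabla\varphi\|_{L^2}^2=-\int\varphi\,\Delta\varphi$ together with Poincar\'e's inequality on the bounded set $A$ yields $\|\nabla\varphi\|_{L^2(A)}\le C(A)\|\Delta\varphi\|_{L^2(A)}$. Plugging in delivers $\int_{F(A)}|\Delta\psi|^2\,dy\le C(A,F)\,\varepsilon$.

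Finally, since \eqref{defcap} requires $C_c^\infty$ test functions while $\psi$ is only $C_c^2$, I would mollify: $\psi_\delta:=\psi*\rho_\delta$ lies in $C_c^\infty(F(A))$ for $\delta$ small, $\|\Delta\psi_\delta\|_{L^2}^2\to\|\Delta\psi\|_{L^2}^2$, and $\psi_\delta\ge\tfrac12$ on a neighborhood of $F(C)$ by uniform convergence, so $2\psi_\delta$ is admissible for $\mathop{Cap_{\Delta}}_{F(A)}(F(C))$. Sending $\delta\to 0$ and then $\varepsilon\to 0$ gives \eqref{ca2}. I expect the main technical obstacle to be the chain-rule step: it manufactures first- and second-order derivatives of $\varphi$ that are not immediately controlled by the Laplacian, and the reduction depends crucially on both the boundedness of $A$ (for Poincar\'e) and the global Plancherel identity on $\R^N$.
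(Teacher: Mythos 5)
Your proposal is correct and follows essentially the same route as the paper: transport a near-optimal test function by composing with $F^{-1}$, expand $\Delta(\varphi\circ F^{-1})$ via the chain rule with coefficients bounded by the $C^2$-data of $F^{-1}$, change variables back to $A$, and control the resulting $|D^2\varphi|$ and $|\nabla\varphi|$ contributions by $\|\Delta\varphi\|_{L^2(A)}$. The only differences are minor choices of tools: you use the global identity $\|D^2\varphi\|_{L^2}=\|\Delta\varphi\|_{L^2}$ and a Poincar\'e argument where the paper invokes Adolfsson's norm equivalence and the H\"older/Gagliardo--Nirenberg inequalities, and you add a mollification step (which the paper glosses over) to replace the merely $C^2_c$ function $\varphi\circ F^{-1}$ by an admissible $C^\infty_c$ test function.
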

\begin{proof}
 By \eqref{ca1} and the definition of \eqref{defcap},  for any $\varepsilon>0$, there exists $\varphi_{\varepsilon}\in C^{\infty}_{c}(A)$,  $\varphi_{\varepsilon}\geq1$ in a neighborhood $ B_{\varepsilon}$ of $C$, such that
	\begin{equation*}
		\int_{A}|\Delta\varphi_{\varepsilon}(x)|^{2}dx<\varepsilon .
	\end{equation*}
	Let $G:=F^{-1}$ and $\phi_{\varepsilon}:=\varphi_{\varepsilon}\circ G$, then we have $\phi_{\varepsilon}\geq1$ in a neighborhood $ B_{\varepsilon}^{\prime}$ of $F(C)$. For any $y\in F(C)$, denote $x:=G(y)$, then
	\begin{equation*}
		\begin{aligned}
			|\Delta\phi_{\varepsilon}(y)|&=\left|\sum_{i,j,k=1}^{N}\frac{\partial^{2}\varphi_{\varepsilon}}{\partial x_{j}\partial x_{k}}\frac{\partial x_{k}}{\partial y_{i}}\frac{\partial x_{j}}{\partial y_{i}}
		+\sum_{i,j=1}^{N}\frac{\partial\varphi_{\varepsilon}}{\partial x_{j}}\frac{\partial^{2} x_{j}}{\partial y_{i}^{2}}\right| \\
			&\leq C\left(  \|J_{G}\|_{\infty,\overline{F(A)}}^{2}|\nabla^{2}\varphi_{\varepsilon}(G(y))|+\|H_{G}\|_{\infty,\overline{F(A)}}|\nabla\varphi_{\varepsilon}(G(y))|\right) \\
			&\leq C(F,A)\left(  |\nabla^{2}\varphi_{\varepsilon}(G(y))|+|\nabla\varphi_{\varepsilon}(G(y))|\right), 	\end{aligned}
	\end{equation*}
 where $J_{G}$ and $H_{G}$ are the Jacobian matrix and Hessian matrix of $G(y)$ respectively. On the one hand, the norm equivalence conclusion in \cite{A92} implies that
$$\|\nabla^{2}\varphi_{\varepsilon}(G(y))\|_{L^{2}(F(A))}\leq C\|\Delta\varphi_{\varepsilon}(G(y))\|_{L^{2}(F(A))},$$
for some constants $C$ independent of $\varepsilon$. Hence, by the H\"{o}lder inequality and Gagliardo-Nirenberg inequality, we get
	\begin{equation*}
		\begin{aligned}
			\int_{F(A)}|\Delta\phi_{\varepsilon}(y)|^{2}dy
			&\leq \overline{C}(F,A)^{2}\int_{F(A)} |\Delta\varphi_{\varepsilon}(G(y))|^{2}+|\nabla\varphi_{\varepsilon}(G(y))|^{2}dy\\
			&\leq \overline{C}(F,A)^{2}\int_{A}\left( |\Delta\varphi_{\varepsilon}(x)|^{2}+|\nabla\varphi_{\varepsilon}(x)|^{2}\right) |det(J_{F}(x))|dx\\
			&\leq \tilde{C}(F,A)\int_{A}\left( |\Delta\varphi_{\varepsilon}(x)|^{2}+|\nabla\varphi_{\varepsilon}(x)|^{2}\right) dx\\
			&\leq
			\tilde{C}(F,A) \int_{A}|\Delta\varphi_{\varepsilon}(x)|^{2}dx+\tilde{C}(F,A) \left(\int_{A} |\nabla\varphi_{\varepsilon}(x)|^{2^{\ast}} dx\right)^{{\frac{2}{2^{\ast}}}}\\
			&\leq
			\tilde{C}(F,A) \int_{A}|\Delta\varphi_{\varepsilon}(x)|^{2}dx+\tilde{C}(F,A)\int_{A} |\Delta\varphi_{\varepsilon}(x)|^{2} dx<	\tilde{C}(F,A)\varepsilon,
		\end{aligned}
	\end{equation*}
	where the positive constant $\tilde{C}(F,A)$ is independent of $\varepsilon$. Thus, \eqref{ca2} holds.
\end{proof}

Therefore, for the Klevin transform, we further obtain

\begin{lem}\label{kcap}
	Let $ Z$ be a compact subset of $\R^N$ with $N\geq5$. Suppose $0\notin Z$ and
	\begin{equation}\label{ca3}
		{\mathop{Cap_{\Delta}}\limits_{\R^N}}( Z)=0,
	\end{equation}
	then
	\begin{equation}\label{ca4}
		{\mathop{Cap_{\Delta}}\limits_{\R^N}}(\mathcal{K}( Z))=0.
	\end{equation}
\end{lem}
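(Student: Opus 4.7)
The plan is to reduce \eqref{ca4} to the diffeomorphism invariance provided by Lemma \ref{cap1}, using Lemma \ref{cap0} twice to pass between global and localized capacities. Since $ Z$ is compact and $0\notin Z$, there exist $0<r<R$ such that $ Z\subset A$, where $A:=\{x\in\R^N:\, r<|x|<R\}$ is a bounded open subset of $\R^N\setminus\{0\}$ with smooth (hence Lipschitz) boundary. First I would invoke Lemma \ref{cap0} on the pair $ Z\subset A$ to upgrade the hypothesis \eqref{ca3} to
$$\mathop{Cap_{\Delta}}_{A}( Z)\leq C_1\mathop{Cap_{\Delta}}_{\R^N}( Z)=0.$$

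Next, the Kelvin inversion $\mathcal{K}(x)=x/|x|^{2}$ is a $C^{\infty}$-diffeomorphism of $\R^N\setminus\{0\}$ onto itself (being its own inverse), and it maps the annulus $A$ onto the annulus $\mathcal{K}(A)=\{y\in\R^N:\, 1/R<|y|<1/r\}$, which is again a bounded open subset of $\R^N\setminus\{0\}$ with Lipschitz boundary. Since $ Z$ is compact in $\R^N\setminus\{0\}$ and $\mathcal{K}$ is continuous there, $\mathcal{K}( Z)$ is a compact subset of $\mathcal{K}(A)$. Applying Lemma \ref{cap1} with $F=\mathcal{K}$, $A$ as above, and $C= Z$, one concludes
$$\mathop{Cap_{\Delta}}_{\mathcal{K}(A)}(\mathcal{K}( Z))=0.$$

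Finally, applying the first (easy) inequality of Lemma \ref{cap0} to the pair $\mathcal{K}( Z)\subset\mathcal{K}(A)$, one obtains
$$\mathop{Cap_{\Delta}}_{\R^N}(\mathcal{K}( Z))\leq\mathop{Cap_{\Delta}}_{\mathcal{K}(A)}(\mathcal{K}( Z))=0,$$
which is precisely \eqref{ca4}. There is no genuine obstacle in this argument once Lemmas \ref{cap0} and \ref{cap1} are in hand; the only delicate point is selecting the intermediate domain $A$ so that it is simultaneously bounded, bounded away from the origin, and has Lipschitz boundary, so that both lemmas may be applied to both $A$ and $\mathcal{K}(A)$. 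A spherical annulus centered at the origin containing $ Z$ fulfills all three requirements and is preserved (up to radii) under $\mathcal{K}$, which is what makes the reduction clean.
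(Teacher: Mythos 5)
Your proposal is correct and follows essentially the same route as the paper: choose a spherical annulus $A$ (the paper's $A_0=B_{R_0}(0)\setminus B_{r_0}(0)$) containing $Z$ and avoiding the origin, localize the capacity via Lemma \ref{cap0}, transfer it under the Kelvin map via Lemma \ref{cap1}, and return to the global capacity with the easy inequality of Lemma \ref{cap0}. No further comment is needed.
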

\begin{proof}
 Notice that $0\in\R^N\setminus Z$ and $ Z$ is a compact subset of $\R^N$, there exist $r_{0}\in(0,1)$ and $R_{0}>0 $ large enough such that $B_{r_{0}}(0)\cap Z=\emptyset$ and  $Z\subset{B_{R_{0}}(0)} \setminus B_{r_0}(0):=A_{0}$. Moreover, since \eqref{ca3}, by Lemma \ref{cap0}, we get
	\begin{equation*}
		{\mathop{Cap_{\Delta}}\limits_{A_{0}}}\left(  Z\right) =0.
	\end{equation*}
 Applying Lemma \ref{cap1}, we have
	\begin{equation*}
		{\mathop{Cap_{\Delta}}\limits_{\mathcal{K}(A_{0})}}\left( \mathcal{K} \left( Z\right)\right)=0,
	\end{equation*}
	which implies ${\rm {\mathop{Cap_{\Delta}}\limits_{\R^N}}\left( \mathcal{K} \left( Z\right)\right)}=0$ by using Lemma \ref{cap0} again.
\end{proof}
Hence, Lemma \ref{kcap} yields that the proof of Theorem 1.2 is complete.

\section{Symmetry result of Punctured solutions}
In this subsection, based on certain appropriate assumptions of the singular set $Z$, we firstly show that every positive punctured solutions of the equation \eqref{main3} satisfies the integral equation \eqref{integ}.
\begin{prop}\label{distribu}
	Let $N\geq 5$. Assume that $u\in C^{4}(\R^{N}\setminus  Z)$ is a positive punctured solution to \eqref{main3} with $p>\frac{N}{N-4}$.
	Suppose that $ Z$ is a compact subset of the hyperplane $ \left\lbrace x_{1}=0\right\rbrace $ with the upper Minkowski dimension $\overline{dim}_{M}( Z)<N-\frac{4p}{p-1}$  or is a smooth k-dimensional closed manifold with $k\leq N-\frac{4p}{p-1}$. Then $u\in L^{p}_{loc}(\R^N)$ and $u$ is a
	distributional solution in $\R^N$, that is,
	\begin{equation}\label{dis}
		\int_{\R^N} u \Delta^{2}\varphi dx=	\int_{\R^N}u^{p} \varphi dx, \quad \forall\varphi\in C^{\infty}_{c}(\mathbb{R}^{N}).
	\end{equation}	
\end{prop}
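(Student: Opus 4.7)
My plan is to prove the proposition in two stages: first show $u\in L^{p}_{loc}(\R^{N})$, then derive \eqref{dis} by passing to the limit in a truncated form of the identity. The main technology is a family of cutoffs $\eta_{\varepsilon}\in C^{\infty}(\R^{N})$ satisfying $\eta_{\varepsilon}\equiv 0$ on $B_{\varepsilon}(Z)$, $\eta_{\varepsilon}\equiv 1$ on $\R^{N}\setminus B_{2\varepsilon}(Z)$ and $|D^{j}\eta_{\varepsilon}|\leq C_{j}\varepsilon^{-j}$ for $j=1,\dots,4$, combined with the Minkowski volume estimate $\mathcal{L}^{N}(B_{\varepsilon}(Z)\cap K)\leq C_{K}\varepsilon^{N-s'}$ for any $s'>s:=\overline{dim}_{M}(Z)$ (respectively $s'=k$ when $Z$ is a smooth $k$-manifold), which is essentially the definition of the upper Minkowski dimension.

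For local integrability, fix a nonnegative $\psi\in C^{\infty}_{c}(\R^{N})$ with $\psi\equiv 1$ on a prescribed compact set and test the classical equation against $\psi\eta_{\varepsilon}^{q}\in C^{\infty}_{c}(\R^{N}\setminus Z)$ with exponent $q\geq 4p/(p-1)$ to obtain
\[
\int u^{p}\psi\eta_{\varepsilon}^{q}\,dx=\int u\,\Delta^{2}(\psi\eta_{\varepsilon}^{q})\,dx.
\]
Expanding by Leibniz, each term of $\Delta^{2}(\psi\eta_{\varepsilon}^{q})$ in which at least one derivative falls on $\eta_{\varepsilon}^{q}$ is supported in $A_{\varepsilon}:=B_{2\varepsilon}(Z)\setminus B_{\varepsilon}(Z)$ and dominated by $C_{\psi}\varepsilon^{-j}\eta_{\varepsilon}^{q-j}$ for $j=1,\dots,4$; the choice $q\geq jp/(p-1)$ guarantees that a nonnegative power of $\eta_{\varepsilon}$ survives. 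Splitting
\[
u\cdot \varepsilon^{-j}\eta_{\varepsilon}^{q-j}=(u\,\eta_{\varepsilon}^{q/p})\cdot(\varepsilon^{-j}\eta_{\varepsilon}^{q-j-q/p})
\]
and applying Young's inequality with exponents $p$ and $p/(p-1)$ yields a pointwise bound $\leq \delta u^{p}\eta_{\varepsilon}^{q}+C_{\delta}\varepsilon^{-jp/(p-1)}\eta_{\varepsilon}^{q-jp/(p-1)}$. The error contribution, after integration, is bounded by $C_{\delta}\varepsilon^{N-s'-jp/(p-1)}$, which is uniformly bounded in $\varepsilon$ exactly under the hypothesis $s\leq N-4p/(p-1)$ (worst case $j=4$). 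Choosing $\delta$ small to absorb the $u^{p}$-terms back into the left-hand side gives $\int u^{p}\psi\eta_{\varepsilon}^{q}\,dx\leq C$ uniformly in $\varepsilon$, and the monotone convergence theorem (as $\eta_{\varepsilon}^{q}\nearrow 1$ on $\R^{N}\setminus Z$, which has full Lebesgue measure) delivers $u\in L^{p}_{loc}(\R^{N})$.

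Once $u\in L^{p}_{loc}$ (hence $u\in L^{1}_{loc}$ by H\"older), identity \eqref{dis} follows by testing the punctured equation against $\eta_{\varepsilon}\phi$ for arbitrary $\phi\in C^{\infty}_{c}(\R^{N})$ and sending $\varepsilon\to 0$. The leading terms $\int u\,\eta_{\varepsilon}\Delta^{2}\phi$ and $\int u^{p}\eta_{\varepsilon}\phi$ converge to $\int u\,\Delta^{2}\phi$ and $\int u^{p}\phi$ respectively by dominated convergence. The error terms $\int u\,D^{j}\eta_{\varepsilon}\,D^{4-j}\phi$ are controlled by H\"older as
\[
C_{\phi}\varepsilon^{-j}\mathcal{L}^{N}(A_{\varepsilon})^{(p-1)/p}\|u\|_{L^{p}(A_{\varepsilon})}\leq C\,\varepsilon^{(N-s')(p-1)/p-j}\|u\|_{L^{p}(A_{\varepsilon})},
\]
with nonnegative $\varepsilon$-exponent under the hypothesis and $\|u\|_{L^{p}(A_{\varepsilon})}\to 0$ by the absolute continuity of the integral.

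The main obstacle is the Young bookkeeping in the first step: one must choose $q$ large enough that $\eta_{\varepsilon}$ enters every error term with a nonnegative power, arrange the $\varepsilon$-scaling to match the Minkowski volume bound exactly at the threshold $s=N-4p/(p-1)$, and set $\delta$ small enough to absorb without circularity. A more classical route would be to first establish the singular upper bound $u(x)\leq C\,\dist(x,Z)^{-4/(p-1)}$ by a doubling/blow-up argument relying on the Lin--Wei--Xu Liouville theorem (and, in the critical case, Lin's classification of isolated singular asymptotics), from which both steps would follow directly; but the cutoff-Young approach described above avoids invoking Liouville-type results and remains entirely self-contained.
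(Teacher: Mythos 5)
Your proposal is correct and follows essentially the same route as the paper: a cutoff vanishing on a tubular neighborhood of $Z$ raised to the power $q=\frac{4p}{p-1}$ as test function, the volume bound $\mathcal{L}^{N}(\mathcal{N}_{\varepsilon})\leq C\varepsilon^{N-s'}$ supplied by the Minkowski-dimension (or smooth manifold) hypothesis, and then a H\"older estimate on the annular error terms, using $u\in L^{p}_{loc}$ and absolute continuity, to pass to the distributional identity. The differences are cosmetic: you absorb via Young's inequality with a small $\delta$ (which requires the natural choice $\psi\equiv 1$ on a neighborhood of the compact set $Z$) and take the neighborhood volume bound directly from the covering definition of $\overline{dim}_{M}$, whereas the paper uses H\"older together with the self-improving inequality $X\leq CX^{1/p}$ and derives the volume bound from a tubular-neighborhood surface estimate plus the co-area formula.
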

\begin{proof}
	First, we assume that $ Z$ is a compact subset of the hyperplane $ \left\lbrace x_{1}=0\right\rbrace $ with the upper Minkowski dimension $\overline{\operatorname{dim}}_{M}( Z)<N-\frac{4p}{p-1}$.  Denote that $\mathcal{N}_{\varepsilon}:=\left\lbrace x\in\R^N\ | \ dist(x, Z)<\varepsilon\right\rbrace $ is the tubular neighborhood of radius $\varepsilon$ and centered about $ Z$. For small $\varepsilon>0$, let
	\begin{equation*}
		\xi_{\varepsilon}(x):=1-\int_{\mathcal{N}_{2\varepsilon}}\rho_{\varepsilon}(x-y)dy,
	\end{equation*}
where $\rho\in C^{\infty}_{c}(B_{1})$ with $\int_{B_{1}}\rho dx=1$ and $\rho_{\varepsilon} (x)=\frac{1}{\varepsilon^{N}}\rho(\frac{x}{\varepsilon})$. Then we have that $\xi_{\varepsilon}\in C^{\infty}(\R^N)$ is a non-negative function which satisfies $0\leq\xi_{\varepsilon}(x)\leq1$,  $\xi_{\varepsilon}(x)=0$ on $\mathcal{N}_{\varepsilon}$ and $\xi_{\varepsilon}(x)=1$ on $\mathcal{N}_{3\varepsilon}^{c}$. Moreover, we also have \begin{equation*}
	|\nabla^{j}\xi_{\varepsilon}|\leq C\varepsilon^{-j}, \ \text{for every}\ j=1,2,\cdots
\end{equation*}
where the positive constant $C$ is independent of $\varepsilon$.
Let $R>0$ and $\varphi_{R}\in C^{\infty}_{c}(\R^N)$ be a standard cut-off function such that $\varphi_{R}=1$ on $B_{R}$ and $\varphi_{R}=0$ on $B_{2R}^{c}$. Taking $\phi(x):=\left[(\xi_{\varepsilon}\varphi_{R})(x) \right]^{q} $ with $q=\frac{4p}{p-1}$, by a direct calculation, we get
\begin{equation*}
	\Delta^{2}\phi\leq C(\xi_{\varepsilon}\varphi_{R})^{q-4}+C\varepsilon^{-4}(\xi_{\varepsilon}\varphi_{R})^{q-4}\chi_{\mathcal{N}_{3\varepsilon}\setminus{\mathcal{N}_{\varepsilon}}},
\end{equation*}
since $(\xi_{\varepsilon}\varphi_{R})^{q-k}\leq C (\xi_{\varepsilon}\varphi_{R})^{q-4}$ and $	|\nabla^{k}\xi_{\varepsilon}|\leq C\varepsilon^{-4} $ for each $0\leq k\leq4$,
where  $C>0$ independing on $\varepsilon$. Multiplying both sides of \eqref{main3} by $\phi$ and integrating by parts, we have
\begin{equation}\label{up1}
\begin{aligned}
	\int_{\mathbb{R}^N}u^{p}\phi dx& =\int_{\mathbb{R}^N}u\Delta^2\phi dx \\
	&\leq C\int_{B_{2R}}u\phi^{\frac{q-4}{q}}dx+C\varepsilon^{-4}\int_{B_{2R}\cap(\mathcal{N}_{3\varepsilon}\setminus\mathcal{N}_\varepsilon)}u\phi^{\frac{q-4}{q}}dx \\
	&\leq C\left(1+\varepsilon^{-4}\left( \mathcal{L}^{N}(\mathcal{N}_{3\varepsilon}\setminus\mathcal{N}_\varepsilon)\right) ^{\frac{p-1}{p}}\right)\left(\int_{\mathbb{R}^N}u^{p}\phi dx\right)^{\frac{1}{p}}.
\end{aligned}
\end{equation}
Next we will estimate the Lebesgue measure  $\mathcal{L}^{N}(\mathcal{N}_{3\varepsilon}\setminus\mathcal{N}_\varepsilon)$. We choose $\lambda>\overline{\operatorname{dim}}_M( Z)$ but sufficiently close to $\overline{\operatorname{dim}}_M( Z)$ such that $\frac{(N-\lambda)(p-1)}{p}-4\geq0$, which is equivalent to $\lambda\leq N-\frac{4p}{p-1}$. From Proposition 5.8 in \cite{KLV}, there exist  constants $C>0$ and $r_{0}>0$ such that
 \begin{equation}\label{hn}
 	\mathcal{H}^{N-1}\left(\partial \mathcal{N}_{r}\right) \leq C r^{N-\lambda-1} \quad \text { for all } 0<r<r_0,
 \end{equation}
where $\mathcal{H}^{N-1}$ is the $(N-1)$-dimensional Hausdorff measure on $\R^N$. Since the distance function $d(x)$ to $ Z$ is a $1$-Lipschitz function, by Rademacher's theorem, it is differentiable, a.e. $|\nabla d|=1$. Then by \eqref{hn} and the co-area formula, for $\varepsilon<\frac{r_{0}}{3}$, we obtain
\begin{equation}\label{hn1}
	\begin{aligned}
		\mathcal{L}^{N}(\mathcal{N}_{3\varepsilon}\setminus\mathcal{N}_\varepsilon) \leq \mathcal{L}^{N}\left(\mathcal{N}_{3 \varepsilon}\right) & =\int_0^{3 \varepsilon}\left(\int_{\partial \mathcal{N}_r} 1 d \mathcal{H}^{N-1}\right) d r \\
		& \leq C \int_0^{3 \varepsilon} r^{N-\lambda-1} d r \leq C \varepsilon^{N-\lambda}.
	\end{aligned}
\end{equation}
Inserting \eqref{hn1} into \eqref{up1}, we have
\begin{equation*}
	\int_{\mathbb{R}^N}u^{p}\phi dx
	\leq C\left(1+\varepsilon^{\frac{(N-\lambda)(p-1)}{p}-4}\right) \left(\int_{\mathbb{R}^N}u^{p}\phi dx\right)^{\frac{1}{p}}
	\leq C \left(\int_{\mathbb{R}^N}u^{p}\phi dx\right)^{\frac{1}{p}},
\end{equation*}
which implies
\begin{equation*}
	\int_{B_{R} \cap \mathcal{N}_{3 \varepsilon}^c} u^{p} d x \leq \int_{\mathbb{R}^N} u^{p} \phi dx \leq C .
\end{equation*}
Hence, letting $\varepsilon\rightarrow 0$, we get
\begin{equation*}
	\int_{B_{R} } u^{p} d x  \leq C ,
\end{equation*}
this means $ u \in L_{l o c}^{p}\left(\mathbb{R}^N\right)$.

Now we show that $u$ is a distributional solution in $\R^N$. For any $\varphi \in C_c^{\infty}\left(\mathbb{R}^N\right)$ with $\operatorname{supp} \varphi \subset B_R$, multiplying \eqref{main3} by $\varphi_{\varepsilon}:=\varphi\xi_{\varepsilon}$ and integrating by parts, we have
\begin{equation} \label{up2}
	\int_{\mathbb{R}^N} u^{p} \varphi\xi_{\varepsilon} d x=\int_{\R^N} u \xi_{\varepsilon}\Delta^2 \varphi d x+\int_{B_R} u F_{\varepsilon}(x) d x,
\end{equation}
where $F_{\varepsilon}(x)$ containing the partial derivatives of $\xi_{\varepsilon}$ up to order $4$. By the H\"older inequality, we obtain that
 \begin{equation*}
 	\begin{aligned}
	\left|\int_{B_R} u F_{\varepsilon}(x) d x\right|
	& \leq\left(\int_{B_R \cap\left(\mathcal{N}_{3 \varepsilon} \backslash \mathcal{N}_{\varepsilon}\right)}\left|F_{\varepsilon}\right|^{\frac{p}{p-1}}dx\right)^{\frac{p-1}{p}}\left(\int_{B_R \cap\left(\mathcal{N}_{3 \varepsilon} \backslash \mathcal{N}_{\varepsilon}\right)} u^{p}dx\right)^{\frac{1}{p}} \\
	& \leq C \varepsilon^{-4} \cdot \left( \mathcal{L}^N\left(\mathcal{N}_{3 \varepsilon} \backslash \mathcal{N}_{\varepsilon}\right)\right) ^{\frac{p-1}{p}} \cdot\left(\int_{B_R \cap\left(\mathcal{N}_{3 \varepsilon} \backslash \mathcal{N}_{\varepsilon}\right)} u^{p}dx\right)^{\frac{1}{p}} \\
	 & \leq C \varepsilon^{\frac{(N-\lambda)(p-1)}{p}-4}\left(\int_{B_R \cap\left(\mathcal{N}_{3 \varepsilon} \backslash \mathcal{N}_{\varepsilon}\right)} u^{p}dx\right)^{\frac{1}{p}} \rightarrow 0\end{aligned}
 \end{equation*}
 if $\varepsilon\rightarrow0$, since $ u \in L_{l o c}^{p}\left(\mathbb{R}^N\right)$ and $\frac{(N-\lambda)(p-1)}{p}-4\geq0$. Thus, we see that $u$ is a distributional solution in $\R^N$ as  $\varepsilon\rightarrow0$ in \eqref{up2}.

When $\Gamma$ is a smooth $k$-dimensional closed manifold with $k\leq N-\frac{4p}{p-1}$, the proof is similar to the above, the only difference is that in this case, $\mathcal{L}^{N}\left(\mathcal{N}_{3 \varepsilon}\right) \leq C \varepsilon^{N-k}.$ We complete the proof.
\end{proof}
Furthermore, we will present several crucial lemmas, which are essential for demonstrating the representation formula \eqref{integ}.
\begin{lem}
	Under the assumptions of Proposition \ref{distribu}, then there hold
	\begin{equation}\label{up3}
		\int_{\R^N}\frac{u}{1+|x|^{\gamma}}dx<\infty,~~\text{for}\ \gamma>N-\frac{4}{p-1},
	\end{equation}
and
\begin{equation}\label{up4}
	\int_{\R^N}\frac{u^{p}}{1+|x|^{\gamma}}dx<\infty,~~\text{for}\ \gamma>N-\frac{4p}{p-1}.
\end{equation}
\end{lem}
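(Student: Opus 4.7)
The plan is to establish two scale-dependent ball bounds and then reduce to a dyadic summation. Specifically, I would first show that there exists $C>0$, independent of $R\geq 1$, with
\begin{equation*}
\int_{B_R} u^{p}\,dx \leq C R^{N-\frac{4p}{p-1}} \qquad \text{and} \qquad \int_{B_R} u\,dx \leq C R^{N-\frac{4}{p-1}}.
\end{equation*}
Granted these, both \eqref{up3} and \eqref{up4} follow immediately from the dyadic decomposition $\R^N = B_1 \cup \bigcup_{k\geq 0}(B_{2^{k+1}}\setminus B_{2^k})$: on the $k$-th annulus the weight satisfies $(1+|x|^\gamma)^{-1}\leq 2^{-k\gamma}$, and summing against the ball bound produces a geometric series in $2^{k(N-4p/(p-1)-\gamma)}$ (respectively $2^{k(N-4/(p-1)-\gamma)}$) that converges precisely when $\gamma$ exceeds the stated threshold.

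To obtain the $u^p$ ball bound, I would re-run the test-function argument from the proof of Proposition \ref{distribu}, but with a cutoff carrying explicit $R$-dependence. Concretely, take $\phi(x)=\bigl[(\xi_\vr\varphi_R)(x)\bigr]^q$ with $q=\frac{4p}{p-1}$ and $\varphi_R(x)=\varphi(x/R)$ for a fixed plateau function $\varphi$, so that $|\nabla^k\varphi_R|\leq CR^{-k}$ and, crucially, $(q-4)/q=1/p$. Multiplying \eqref{main3} by $\phi$, integrating by parts, and splitting $\Delta^2\phi$ into the piece where all four derivatives fall on $\varphi_R$ (which produces the scale factor $R^{-4}$) and the piece supported on $\cn_{3\vr}\setminus\cn_\vr$, Hölder's inequality with exponents $p$ and $p/(p-1)$ gives
\begin{equation*}
\int u^{p}\phi\,dx \leq CR^{-4}\,|B_{2R}|^{(p-1)/p}\Bigl(\int u^{p}\phi\,dx\Bigr)^{1/p} + \mathrm{Err}(\vr),
\end{equation*}
where $\mathrm{Err}(\vr)\to 0$ as $\vr\to 0$ thanks to the Minkowski/Hausdorff estimate $\mathcal{L}^N(\cn_{3\vr})\leq C\vr^{N-\lambda}$ combined with $(N-\lambda)(p-1)/p-4\geq 0$, exactly as exploited in Proposition \ref{distribu}. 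Absorbing the $\int u^p\phi$ factor on the right-hand side and letting $\vr\to 0$, then $\phi\nearrow\chi_{B_R}$, yields $\int_{B_R}u^p\leq CR^{N-4p/(p-1)}$. The bound on $\int_{B_R}u$ then follows by one more application of Hölder with exponents $p$ and $p/(p-1)$, the exponent working out because $\frac{1}{p}\bigl(N-\frac{4p}{p-1}\bigr)+\frac{(p-1)N}{p}=N-\frac{4}{p-1}$.

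The main difficulty is largely bookkeeping: one must check that the scaling $|\nabla^k\varphi_R|\leq CR^{-k}$ combines with the Hölder balance $(q-4)/q=1/p$ in precisely the right way to absorb $\int u^p\phi$ on the left-hand side, and that the error generated by the cutoff $\xi_\vr$ around $Z$ vanishes uniformly in $R$ as $\vr\to 0$. The latter is exactly where the hypothesis $\overline{\dim}_M(Z)<N-\frac{4p}{p-1}$ (or $k\leq N-\frac{4p}{p-1}$ in the manifold case) is indispensable, for the same reason as in Proposition \ref{distribu}. Once the scale-dependent ball bounds are in place, the dyadic summation is routine and delivers the two claimed weighted integral bounds.
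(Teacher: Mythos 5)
Your proposal is correct and follows essentially the same route as the paper: the scale-aware ball bounds $\int_{B_R}u^{p}\,dx\leq CR^{N-\frac{4p}{p-1}}$ and $\int_{B_R}u\,dx\leq CR^{N-\frac{4}{p-1}}$ obtained by testing with $\varphi_R^{q}$, $q=\frac{4p}{p-1}$, followed by H\"older and the dyadic summation. The only difference is that the paper, having already established in Proposition \ref{distribu} that $u$ is a distributional solution on all of $\R^N$, plugs $\varphi_R^{q}$ directly into \eqref{dis}, so your re-run of the $\xi_{\vr}$ cutoff and the Minkowski-dimension estimate is harmless but unnecessary.
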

\begin{proof}
	Let $R>0$ and $\varphi_{R}\in C^{\infty}_{c}(\R^N)$ be a standard cut-off function such that $\varphi_{R}=1$ on $B_{R}$ and $\varphi_{R}=0$ on $B_{2R}^{c}$. Taking $\phi=\varphi_{R} ^{q} $ with $q=\frac{4p}{p-1}$ as a test function in \eqref{dis}, by using of H\"{o}lder inequality, we get
	\begin{equation*}
		\begin{aligned}
			\int_{\mathbb{R}^N}u^{p}\varphi_{R} ^{q} dx& =\int_{\mathbb{R}^N}u\Delta^2(\varphi_{R} ^{q}) dx
			\leq CR^{-4}\int_{B_{2R}}u\varphi_{R} ^{q-4}dx = CR^{-4}\int_{B_{2R}}u\varphi_{R} ^{\frac{q}{p}}dx  \\
			&\leq C R^{\frac{N(p-1)}{p}-4}\left(\int_{\mathbb{R}^N}u^{p}\varphi_{R} ^{q} dx\right)^{\frac{1}{p}}.
		\end{aligned}
	\end{equation*}
	Thus, for every $R>0$, there holds
	\begin{equation}\label{upr}
		\int_{B_R}u^{p}dx\leq CR^{N-\frac{4p}{p-1}},
	\end{equation}
from which we have
\begin{equation}\label{upr1}
	\begin{aligned}
		\int_{\mathbb{R}^N} \frac{u^{p}}{1+|x|^\gamma} d x & =\int_{B_1} \frac{u^{p}}{1+|x|^\gamma} d x+\sum_{i=1}^{\infty} \int_{B_{2^i} \backslash B_{2^{i-1}}} \frac{u^{p}}{1+|x|^\gamma} d x \\
		& \leq C \int_{B_1} u^{p} dx+\sum_{i=1}^{\infty} \int_{B_{2^i} \backslash B_{2^{i-1}}} u^{p} d x \cdot 2^{-\gamma(i-1)} \\
		& \leq C+C \sum_{i=1}^{\infty}\left(2^i\right)^{N-\frac{4p}{p-1}-\gamma}<+\infty ,
	\end{aligned}
\end{equation}
for $\gamma>N-\frac{4p}{p-1}$. Hence, \eqref{up4} holds. Notice that $u\in L^{p}_{loc}(\R^N)$, then there holds $u\in L^{1}_{loc}(\R^N)$. By \eqref{upr} and the  H\"{o}lder inequality, we get
	\begin{equation*}
	\int_{B_R}udx\leq CR^{N-\frac{4}{p-1}}.
\end{equation*}
Taking a similar argument as \eqref{upr1}, we obtain
\begin{equation*}
	\begin{aligned}
		\int_{\mathbb{\R}^N} \frac{u}{1+|x|^\gamma} d x\leq C+C \sum_{i=1}^{\infty}\left(2^i\right)^{N-\frac{4}{p-1}-\gamma}<+\infty ,
	\end{aligned}
\end{equation*}
for $\gamma>N-\frac{4}{p-1}$.
The proof is complete.
\end{proof}
	Under the assumptions of Proposition \ref{distribu}, for any $x\in\R^N\setminus Z$, we define
	\begin{equation}\label{vx}
		v(x):=c_{N,2}\int_{\R^N}\frac{u^{p}(y)}{|x-y|^{N-4}}dy,
	\end{equation}
$c_{N,2}=\Gamma(\frac{N-4}{2})\left[ 2^{4}\pi^{\frac{N}{2}}\Gamma(2)\right]^{-1} $, where $\Gamma$ is the Riemann Gamma function. Clearly, by \eqref{up4}, $v$ is well-defined and $v\in L^{1}_{loc}(\R^N)$. Indeed, for $R>0$,
\begin{equation*}
	v(x) =c_{N,2}\int_{B_{2R}}\frac{u^{p}(y)}{|x-y|^{N-4}}dy+c_{N,2}\int_{B_{2R}^{c}}\frac{u^{p}(y)}{|x-y|^{N-4}}dy:=v_{1,R}+v_{2,R},
\end{equation*}
it follows from $u^{p}\in L^{1}(B_{2R})$ that $v_{1,R}\in L^{1}(B_{R})$. And we get from \eqref{up4} again that $v_{2,R}\in L^{\infty}(B_{R})$. Thus, we have $v\in L^{1}_{loc}(\R^N)$. Furthermore, combining with \eqref{up4}, we can obtain the following property of $v$.
\begin{lem}\label{vx0}
Under the assumptions of Proposition \ref{distribu}, then we have $v\in L_{0}(\R^N)$, where
$$L_{0}(\R^N):=\left\lbrace v\in L_{loc}^{1}(\R^N)\  \left| \ \int_{R^N}\frac{v}{1+|x|^{N}}dx<\infty\right\rbrace \right. .$$
\end{lem}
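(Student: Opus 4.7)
The plan is to bound $\int_{\R^N}\frac{v(x)}{1+|x|^{N}}dx$ by reducing it, via Fubini, to the one appearing in \eqref{up4}. Since $v\geq 0$, Tonelli's theorem gives
\begin{equation*}
\int_{\R^N}\frac{v(x)}{1+|x|^{N}}dx
= c_{N,2}\int_{\R^N} u^{p}(y)\,I(y)\,dy,\qquad
I(y):=\int_{\R^N}\frac{dx}{|x-y|^{N-4}\bigl(1+|x|^{N}\bigr)}.
\end{equation*}
Thus the whole task reduces to a pointwise upper bound on the auxiliary kernel $I(y)$; I expect this to be the main (and essentially only) technical step, although it is elementary.

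The claim will be $I(y)\leq C/(1+|y|^{N-4})$ for all $y\in\R^N$, with $C=C(N)$. For $|y|\leq 1$, I split the $x$-integral at $|x-y|=1$: on $\{|x-y|<1\}$ the weight $1/(1+|x|^N)$ is bounded and $|x-y|^{-(N-4)}$ is integrable near $x=y$ since $N-4<N$; on $\{|x-y|\geq1\}$ we have $|x-y|^{-(N-4)}\leq 1$ while $(1+|x|^N)^{-1}$ is integrable on $\R^N$. For $|y|>1$ I split the $x$-integral into three regions determined by $y$:
\begin{equation*}
A_1=\{|x|\leq|y|/2\},\qquad A_2=\{|y|/2<|x|<2|y|\},\qquad A_3=\{|x|\geq 2|y|\}.
\end{equation*}
On $A_1$, $|x-y|\geq|y|/2$, so the contribution is at most $C|y|^{-(N-4)}\int_{\R^N}(1+|x|^N)^{-1}dx\leq C|y|^{4-N}$. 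On $A_3$, $|x-y|\geq|x|/2$ and $1+|x|^N\geq|x|^N$, so the integrand is $\leq C|x|^{-(2N-4)}$, whose integral over $A_3$ is $C|y|^{4-N}$ as $2N-4>N$. On $A_2$, $1+|x|^N\geq c|y|^N$ and the annulus is contained in $B_{3|y|}(y)$, so
\begin{equation*}
\int_{A_2}\frac{dx}{|x-y|^{N-4}(1+|x|^N)}\leq C|y|^{-N}\int_{B_{3|y|}(y)}\frac{dx}{|x-y|^{N-4}}\leq C|y|^{4-N}.
\end{equation*}
Summing the three contributions yields $I(y)\leq C|y|^{4-N}$ for $|y|>1$, and combined with the $|y|\leq 1$ estimate proves the claim.

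Inserting the bound, it follows that
\begin{equation*}
\int_{\R^N}\frac{v(x)}{1+|x|^{N}}dx
\leq C\int_{\R^N}\frac{u^{p}(y)}{1+|y|^{N-4}}dy.
\end{equation*}
Since $p>1$, we have $N-4>N-\tfrac{4p}{p-1}$, so $\gamma=N-4$ is an admissible exponent in \eqref{up4}, and the right-hand side is finite. Together with $v\in L^{1}_{loc}(\R^N)$ (already established just before the statement), this gives $v\in L_0(\R^N)$.
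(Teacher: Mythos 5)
Your overall strategy---Tonelli, a pointwise bound on the auxiliary kernel $I(y)$, then \eqref{up4}---is the natural one (the paper itself simply defers to Lemma 2.7 of \cite{DY}, which proceeds in the same spirit), but your key estimate $I(y)\leq C/(1+|y|^{N-4})$ is false as stated, and the cause is a concrete error you invoke twice: $\int_{\R^N}(1+|x|^N)^{-1}dx$ is \emph{not} finite, since the integrand behaves like $|x|^{-N}$ at infinity and the integral diverges logarithmically. On $A_1=\{|x|\leq|y|/2\}$ this costs a logarithm: there $|x-y|\geq|y|/2$, but $\int_{|x|\leq|y|/2}(1+|x|^N)^{-1}dx\sim c\log|y|$; moreover on $\{1\leq|x|\leq|y|/2\}$ one also has $|x-y|\leq 3|y|/2$, so the $A_1$ contribution is actually comparable to $|y|^{4-N}\log|y|$, hence $I(y)\gtrsim |y|^{4-N}\log|y|$ for large $|y|$ and no bound of the form $C|y|^{4-N}$ can hold. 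The same false integrability claim appears in your case $|y|\leq 1$ on $\{|x-y|\geq1\}$; there the conclusion $I(y)\leq C$ is still true, but you must justify it using the joint decay $|x-y|^{-(N-4)}(1+|x|^N)^{-1}\lesssim|x|^{-(2N-4)}$ for large $|x|$, which is integrable since $2N-4>N$.

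The damage is repairable and the lemma survives: the correct estimate is $I(y)\leq C\,(1+\log(2+|y|))/(1+|y|^{N-4})$, and since $p>1$ gives $N-\frac{4p}{p-1}<N-4$ strictly, you may choose $\gamma$ with $N-\frac{4p}{p-1}<\gamma<N-4$ and absorb the logarithm, obtaining $\int_{\R^N}\frac{v(x)}{1+|x|^{N}}dx\leq C\int_{\R^N}\frac{u^{p}(y)}{1+|y|^{\gamma}}dy<\infty$ by \eqref{up4}; together with $v\in L^{1}_{loc}(\R^N)$ this yields $v\in L_{0}(\R^N)$. So the architecture of your argument is sound and essentially the intended one, but as written the central kernel bound and the integrability claim supporting it are incorrect and need this correction.
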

\begin{proof}
	We skip the proof for convenience, as it follows a similar approach to Lemma 2.7 in \cite{DY}.
\end{proof}
\begin{thm}\label{integral}
	Under the assumptions of Proposition \ref{distribu}, then $u$ is a solution of the integral equation
		\begin{equation}\label{uvi}
		u(x)=c_{N,2}\int_{\R^N}\frac{u^{p}(y)}{|x-y|^{N-4}}dy, \quad \text{for } x\in\R^N\setminus Z.
	\end{equation}
\end{thm}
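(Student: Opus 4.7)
The strategy is to show that the Riesz-potential candidate $v$ defined in \eqref{vx} coincides with $u$ off $Z$, by proving that $h:=u-v$ is an entire biharmonic function which integrability forces to vanish.

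First I would verify that $v$ satisfies $\Delta^2 v=u^p$ in $\mathcal{D}'(\R^N)$. Since $c_{N,2}|x|^{4-N}$ is the fundamental solution of $\Delta^2$ on $\R^N$, for any $\phi\in C^\infty_c(\R^N)$ an application of Fubini's theorem gives
\begin{equation*}
\int_{\R^N} v\,\Delta^2\phi\,dx=\int_{\R^N} u^p(y)\left(c_{N,2}\int_{\R^N}\frac{\Delta^2\phi(x)}{|x-y|^{N-4}}\,dx\right)dy=\int_{\R^N} u^p\,\phi\,dx.
\end{equation*}
The interchange is justified by $u^p\in L^1_{loc}(\R^N)$ from Proposition \ref{distribu} together with the decay \eqref{up4} which controls the kernel at infinity. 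Combining this with the distributional identity $\Delta^2 u=u^p$ from Proposition \ref{distribu} yields $\Delta^2 h=0$ in $\mathcal{D}'(\R^N)$. By hypoellipticity of $\Delta^2$, $h$ has a $C^\infty$ representative which is classically biharmonic on all of $\R^N$; in particular $h$ extends smoothly across $Z$.

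Next I would show that $h$ is globally bounded on $\R^N$. Estimate \eqref{up3} with $\gamma=N$ (valid since $p>\frac{N}{N-4}$ forces $N-\frac{4}{p-1}<N$) gives $u\in L_0(\R^N)$, and Lemma \ref{vx0} gives $v\in L_0(\R^N)$; hence $h\in L_0(\R^N)$. Setting $M:=\int_{\R^N}|h|/(1+|x|^N)\,dx<\infty$, for any $x_0\in\R^N$ and every $R\geq 1+|x_0|$,
\begin{equation*}
\int_{B_{2R}(x_0)}|h|\,dx\leq \bigl(1+(3R)^N\bigr)M\leq C R^N.
\end{equation*}
The interior $L^\infty$–$L^1$ estimate for the biharmonic equation ($|h(x_0)|\leq C R^{-N}\|h\|_{L^1(B_{2R}(x_0))}$) then yields $|h(x_0)|\leq CM$ uniformly in $x_0$.

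Finally, a two-step Liouville argument closes everything: interior estimates for biharmonic functions give $|\Delta h(x_0)|\leq C R^{-2}\|h\|_{L^\infty(\R^N)}\to 0$ as $R\to\infty$, so $\Delta h\equiv 0$; therefore $h$ is a bounded entire harmonic function, hence constant by the classical Liouville theorem. Since $\int_{\R^N}dx/(1+|x|^N)=\infty$, no nonzero constant lies in $L_0(\R^N)$, which forces $h\equiv 0$ and proves \eqref{uvi}. The principal technical point is the justification of $\Delta^2 v=u^p$ in $\mathcal{D}'(\R^N)$ across the singular set $Z$; this is exactly where the decay \eqref{up4} and $u^p\in L^1_{loc}(\R^N)$ from Proposition \ref{distribu} are essential, so the real work already sits in those preliminary results.
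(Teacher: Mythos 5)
Your proposal is correct and follows essentially the same route as the paper: define the Riesz potential $v$, verify $\Delta^2 v=u^{p}$ in $\mathcal{D}'(\R^N)$ by Fubini using $u^{p}\in L^{1}_{loc}$ and \eqref{up4}, and conclude $u-v\equiv0$ from membership in $L_{0}(\R^N)$ via \eqref{up3} and Lemma \ref{vx0}. The only difference is that where the paper simply invokes the Liouville-type theorem for biharmonic functions in $L_{0}(\R^N)$ (Lemma 5.7 of \cite{AGHW}), you prove it directly through hypoellipticity, the $L^{\infty}$--$L^{1}$ interior estimate, and the classical Liouville theorem, which is a valid self-contained substitute.
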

\begin{proof}
	 As $v$ is defined in \eqref{vx}, we see that $v\in L^{1}_{loc}(\R^N)$ and it satisfies
	 \begin{equation*}
	 	\Delta^{2} v=u^{p}
	 	\quad
	 	\quad \text{in} \ \mathbb{R}^{N}
	 \end{equation*}
 in the distributional sense, that is,  	\begin{equation}\label{vx2}
 	\int_{\R^N} v\Delta^{2}\varphi dx=	\int_{\R^N}u^{p} \varphi dx, \quad \forall\varphi\in C^{\infty}_{c}(\mathbb{R}^{N}).
 \end{equation}	
In fact, we know $\mathcal{L}^{N}(Z)=0$. For any $\varphi\in C_{c}^{\infty}(\R^N)$, there holds
\begin{equation*}
	\begin{aligned}
		\int_{\mathbb{R}^N} v(x)\Delta^2 \varphi(x) d x & =\int_{\mathbb{R}^N}\left(c_{N, 2} \int_{\mathbb{R}^N} \frac{u^{p}(y)}{|x-y|^{N-4}} d y\right)\Delta^2 \varphi(x) d x \\
		& =\int_{\mathbb{R}^N} u^{p}(y)\left(c_{N, 2} \int_{\mathbb{R}^N} \frac{\Delta^2 \varphi(x)}{|x-y|^{N-4}} d x\right) d y \\
		& =\int_{\mathbb{R}^N} u^{p}(y) \varphi(y) d y,
	\end{aligned}
\end{equation*}
own on the Fubini's theorem. Let $w=u-v$, then it from  Proposition \ref{distribu} and \eqref{vx2} that \begin{equation*}
	\Delta^{2} w=0
	\quad
	\quad \text{in} \ \mathbb{R}^{N}
\end{equation*}
in the distributional sense. Morover, by \eqref{up3} and Lemma \ref{vx0}, we easily see that both $u$ and $v$ belong to $L_{0}(\R^N)$. Hence, we have $w\in L_{0}(\R^N)$, by the Liouville type theorem (see \cite{AGHW}, Lemma 5.7), it follows $w\equiv0$ in $\R^N$. Therefore, we have \eqref{uvi}.
\end{proof}
\textbf{Proof of Theorem \ref{symsub}}. Notice that  $u$ has non-removable singularity in the singular set $ Z\subset\left\lbrace x_{1}=0\right\rbrace $, without loss of generality, we suppose
\begin{equation}\label{limsup}
	\limsup_{\substack{x\in\R^N\setminus Z \\ x_{1}\rightarrow 0}} u(x)=\infty.
\end{equation}
 It follows from Theorem \ref{integral} that $u$ satisfies \eqref{uvi}. Up to a multiplicative constant is omitted for
brevity, next we will apply the method of moving spheres to the integral equation
	\begin{equation*}
	u(y)=\int_{\R^N}\frac{u^{p}(z)}{|y-z|^{N-4}}dz, \quad \text{for } y\in\R^N\setminus Z.
\end{equation*}

\textit{Step 1.} For every $x=(x_{1},0)\in \R^N$ with $x_{1}\neq0$, there exists a real number $\lambda_{2}\in (0, |x|)$ such that for any $0<\lambda<\lambda_{2}$, we have
\begin{equation}\label{uy0}
	u_{x,\lambda}(y)\leq u(y), \quad \forall\ |y-x|\geq\lambda,\quad y\in\R^N\setminus Z,
\end{equation}
where $u_{x,\lambda}(y):=\left(\frac{\lambda}{|y-x|} \right)^{N-4}u(y^{x,\lambda}) $ and $y^{x,\lambda}:=x+\frac{\lambda^{2}(y-x)}{|y-x|^{2}}$.

Firstly, we will show that there exists $0<\lambda_{1}<|x|$ such that for all $0<\lambda<\lambda_{1}$,
\begin{equation}\label{uyy1}
	u_{x,\lambda}(y)\leq u(y), \quad \forall\ \lambda\leq|y-x|<\lambda_{1},\quad y\in\R^N\setminus Z.
\end{equation}
Indeed, we can assume that $|\nabla \ln u|\leq C_{0}$ in $B_{\frac{|x|}{2}}(x)$ for some constant $C_{0}>0$. Then for any $0<r<\lambda_{1}:=\min\left\lbrace \frac{N-4}{2C_{0}},\frac{|x|}{2} \right\rbrace $ and $\theta\in \mathbb{S}^{N-1}$, we have
\begin{equation}\label{ur}
	\begin{aligned}
		\frac{d}{d r}\left(r^{\frac{N-4}{2}} u(x+r \theta)\right) & =r^{\frac{N-4}{2}-1} u(x+r \theta)\left(\frac{N-4}{2}-r \frac{\nabla u \cdot \theta}{u}\right) \\
		& \geq r^{\frac{N-4}{2}-1} u(x+r \theta)\left(\frac{N-4}{2}-C_0 r\right)>0.
	\end{aligned}
\end{equation}
For $0<\lambda\leq|y-x|<\lambda_{1}$, let $\theta=\frac{y-x}{|y-x|}$, $r_{1}=|y-x|$ and $r_{2}=\frac{\lambda^{2}r_{1}}{|y-x|^{2}}$, then by \eqref{ur} we obtain
\begin{equation*}
	r_2^{\frac{N-4}{2}} u(x+r_{2} \theta)<r_{1}^{\frac{N-4}{2}} u(x+r_{1} \theta),
\end{equation*}
which implies
\begin{equation*}
	u_{x,\lambda}(y)\leq u(y), \quad \forall\ 0<\lambda\leq|y-x|<\lambda_{1},\quad y\in\R^N\setminus Z.
\end{equation*}

\medskip

Secondly, we show that there exists $0<\lambda_{2}<\lambda_{1}<|x|$ such that for all $0<\lambda<\lambda_{2}$,
\begin{equation}\label{uyy2}
	u_{x,\lambda}(y)\leq u(y), \quad \forall \ |y-x|\geq\lambda_{1},\quad y\in\R^N\setminus Z.
\end{equation}
In fact,
using the Fatou Lemma, we obtain
\begin{equation*}
	\liminf _{\substack{y \in \mathbb{R}^N \backslash Z \\|y| \rightarrow \infty}}|y|^{N-4} u(y)=\liminf _{\substack{y \in \mathbb{R}^N \backslash Z \\|y| \rightarrow \infty}} \int_{\mathbb{R}^N} \frac{|y|^{N-4} u^{p}(z)}{|y-z|^{N-4}} d z \geq \int_{\mathbb{R}^N} u^{p}(z) d z>0,
\end{equation*}
from which there exist $c_1>0$ and $R_{1}>0$ such that
\begin{equation}\label{uy1}
	u(y)\geq\frac{c_{1}}{|y|^{N-4}} \quad \text{for} \ |y|\geq R_{1},\ y\in\R^N\setminus Z.
\end{equation}
On the other hand, for $y\in\R^N\setminus Z$ and $ |y|< R_{1},$ we get
\begin{equation}\label{uy2}
	u(y)\geq \int_{B_{R_{1}}}\frac{u^{p}(z)}{|y-z|^{N-4}}dz\geq (2R_{1})^{4-N}\int_{B_{R_{1}}}u^{p}(z)dz>0.
\end{equation}
Thus, by \eqref{uy1} and \eqref{uy2}, we know that there exists $C > 0$ such that
\begin{equation}\label{uy3}
	u(y)\geq\frac{C}{|y-x|^{N-4}} \quad \text{for} \ |y-x|\geq \lambda_{1},\ y\in\R^N\setminus Z.
\end{equation}
Therefore, by \eqref{uy3},  we choose $\lambda_{2}\in(0,\lambda_{1})$ sufficiently small, for $0<\lambda<\lambda_{2}$, there holds
\begin{equation*}
	u_{x,\lambda}(y)=\left(\frac{\lambda}{|y-x|} \right)^{N-4}u(y^{x,\lambda})\leq\left(\frac{\lambda}{|y-x|} \right)^{N-4}\sup\limits_{B_{\lambda_{1}}(x)}u\leq u(y), \quad \forall\ |y-x|\geq\lambda_{1},\quad y\in\R^N\setminus Z.
\end{equation*}
It is easy to see that \eqref{uy0} follows by \eqref{uyy1} and \eqref{uyy2}.

\medskip

Define
\begin{equation}
		\bar{\lambda}(x)  :=\sup \left\{0<\mu \leq|x|\ \left|\ u_{x, \lambda}(y) \leq u(y), \ \forall \ \right| y-x \mid \geq \lambda,\ y \in \mathbb{R}^N \setminus Z,\right.
		 \forall\ 0<\lambda<\mu\},
\end{equation}
then  $\bar{\lambda}(x)>0$ is well defined and $\bar{\lambda}\leq |x|$.

\medskip

 \textit{Step 2.} We prove $	\bar{\lambda}(x)=|x|$, for any $x=(x_{1},0)\in \R^N$ with $x_{1}\neq0$.

Argue by contradiction and suppose that $\bar{\lambda}<|x|$.  For simplicity, we denote $\bar{\lambda}=\bar{\lambda}(x)$.
Claim that there exists $\varepsilon>0$ such that  for any $\lambda\in\left[\bar{\lambda},\bar{\lambda}+\varepsilon \right) $,
\begin{equation*}
u_{x, \lambda}(y) \leq u(y), \ \forall \ | y-x| \geq \lambda,\ y \in \mathbb{R}^N \setminus Z.
\end{equation*}
Indeed, by the definition of $\bar{\lambda}$, there holds
\begin{equation}\label{ux1}
	u_{x, \bar{\lambda}}(y) \leq u(y), \ \forall \ | y-x | \geq \bar{\lambda},\ y \in \mathbb{R}^N \setminus Z.
\end{equation}
 Then we have $	u_{x, \bar{\lambda}}(y) \not\equiv u(y)$, for $ | y-x | \geq \bar{\lambda}$ with $ y \in \mathbb{R}^N \setminus Z.$ Otherwise, $	u_{x, \bar{\lambda}}(y)\equiv u(y)$, it is impossible by \eqref{limsup}.  For $\bar{\lambda}\leq\lambda<|x|$ and $ y \in \mathbb{R}^N \setminus Z$, by direct calculation, it follows
 \begin{equation*}
u_{x, \lambda}(y)=\int_{ \R^N} \frac{u_{x,\lambda}^{p}(z)}{|y-z|^{N-4}}\left(\frac{\lambda}{|z-x|}\right)^{\tau} d z,
 \end{equation*}
from which we have
\begin{equation*}
		u(y)-u_{x, \lambda}(y)=\int_{|z-x| \geq \lambda} K(x, \lambda ; y, z)\left[u^p(z)-\left(\frac{\lambda}{|z-x|}\right)^{\tau} u_{x, \lambda}^p(z)\right] d z,
\end{equation*}
where
\begin{equation*}
	K(x, \lambda ; y, z)=\frac{1}{|y-z|^{N-4}}-\left(\frac{\lambda}{|y-x|}\right)^{N-4} \frac{1}{\left|y^{x, \lambda}-z\right|^{N-4}} \quad\text{and} \quad \tau=N+4-p(N-4)\geq0 .
\end{equation*}
Moreover,
\begin{equation*}
		K(x, \lambda ; y, z)>0,\quad \forall\ |y-x|,|z-x|>\lambda>0.
\end{equation*}
Hence, by the positivity of $K(x, \lambda ; y, z)$ and $\tau\geq0$, it follows
\begin{equation}\label{uintegral}
	\begin{aligned}
		u(y)-u_{x, \lambda}(y)&	=\int_{|z-x| \geq \lambda} K(x, \lambda ; y, z)\left[u^p(z)- u_{x, \lambda}^p(z)\right] d z+\int_{|z-x| \geq \lambda} K(x, \lambda ; y, z)\left[1-\left(\frac{\lambda}{|z-x|}\right)^{\tau}\right] u_{x, \lambda}^p(z) d z\\
		&\geq\int_{|z-x| \geq \lambda} K(x, \lambda ; y, z)\left[u^p(z)- u_{x, \lambda}^p(z)\right] d z.
	\end{aligned}
\end{equation}
Since $\mathcal{L}^{N}(Z)=0$, we obtain by \eqref{ux1} and \eqref{uintegral} that
\begin{equation*}
	u_{x, \bar{\lambda}}(y) < u(y), \quad \forall \ | y-x | \geq \bar{\lambda},\ y \in \mathbb{R}^N \setminus Z.
\end{equation*}
Applying the Fatou Lemma again, together with $\eqref{uintegral}$, we get
\begin{equation*}
	\begin{aligned}
	&\liminf _{\substack{y \in \mathbb{R}^N \backslash Z \\|y| \rightarrow \infty}}|y-x|^{N-4} \left( u(y)-u_{x, \bar{\lambda}}(y)\right) \\
	\geq~&\liminf _{\substack{y \in \mathbb{R}^N \backslash Z \\|y| \rightarrow \infty}}\int_{|z-x| \geq \bar{\lambda}}|y-x|^{N-4} K(x, \bar{\lambda }; y, z)\left[u^p(z)- u_{x, \bar{\lambda}}^p(z)\right] d z\\
	\geq~&\int_{|z-x| \geq \bar{\lambda}}\left[ 1-\left(\frac{\bar{\lambda}}{|z-x|} \right)^{N-4} \right] \left[u^p(z)- u_{x, \bar{\lambda}}^p(z)\right] d z>0.
		\end{aligned}
\end{equation*}
Thus, there exist $c_{2}>0$ and $R_{2}>0$ such that
\begin{equation}\label{ui1}
u(y)-u_{x, \bar{\lambda}}(y)\geq\frac{c_{2}}{|y-x|^{N-4}}, \quad \forall \ | y-x | \geq R_{2},\ y \in \mathbb{R}^N \setminus Z.
\end{equation}
By the definition of $K(x, \lambda ; y, z)$, for $| y-x | \geq\bar{\lambda}+1$ and $|z-x|\geq\bar{\lambda}+2$,  there holds
\begin{equation}\label{kx}
	\frac{\delta_{1}}{|y-z|^{N-4}}\leq K(x, \lambda ; y, z)\leq \frac{1}{|y-z|^{N-4}},
\end{equation}
for some $\delta_{1}\in(0,1)$.
 Consequently, for $\bar{\lambda}+1\leq| y-x | < R_{2},\ y \in \mathbb{R}^N \setminus Z$ and $|z-x|\geq\bar{\lambda}+2$,  we deduce by \eqref{uintegral} and \eqref{kx} that
 \begin{equation*}
 	\begin{aligned}
 u(y)-u_{x, \bar{\lambda}}(y)
 &\geq\int_{\bar{\lambda}+2\leq|z-x| \leq \bar{\lambda}+4} K(x, \bar{\lambda} ; y, z)\left[u^p(z)- u_{x,\bar{\lambda}}^p(z)\right] d z\\
 &\geq\int_{\bar{\lambda}+2\leq|z-x| \leq \bar{\lambda}+4} \frac{\delta_{1}}{|y-z|^{N-4}}\left[u^p(z)- u_{x,\bar{\lambda}}^p(z)\right] d z\\
 &\geq C_{2}\int_{\bar{\lambda}+2\leq|z-x| \leq \bar{\lambda}+4}\left[u^p(z)- u_{x,\bar{\lambda}}^p(z)\right] d z>0
 	\end{aligned}
 \end{equation*}
with some $C_{2}>0$. This, together with \eqref{ui1}, we know that there exists $\varepsilon_{1}>0$ small enough such that
\begin{equation}\label{ui2}
	u(y)-u_{x, \bar{\lambda}}(y)\geq\frac{\varepsilon_1}{|y-x|^{N-4}}, \quad \forall \ | y-x | \geq \bar{\lambda}+1,\ y \in \mathbb{R}^N \setminus Z.
\end{equation}
Therefore, combining with \eqref{ui2}, there exists $0<\varepsilon_{2}<\varepsilon_{1}$ such that for $\bar{\lambda}\leq\lambda\leq\bar{\lambda}+\varepsilon_{2}<|x|$,
\begin{equation}\label{ui3}
	\begin{aligned}
	u(y)-u_{x, \lambda}(y) & \geq \frac{\varepsilon_1}{|y-x|^{N-4}}+u_{x, \bar{\lambda}}(y)-u_{x, \lambda}(y) \\
		& \geq \frac{\varepsilon_1}{2|y-x|^{N-4}}, \quad \forall\ |y-x| \geq \bar{\lambda}+1, \quad y \in \mathbb{R}^N \setminus Z.
	\end{aligned}
\end{equation}
On the other hand, for $\varepsilon\in(0,\varepsilon_2)$ to be determined later, by \eqref{ux1}, \eqref{uintegral} and \eqref{ui3}, for any $\lambda\in\left[\bar{\lambda},\bar{\lambda}+\varepsilon \right) $ and $ \lambda\leq |y-x| < \bar{\lambda}+1 $, $ y \in \mathbb{R}^N \setminus Z$, we have
\begin{equation}\label{ui4}
	\begin{aligned}
		u(y)-u_{x, \lambda}(y)
		\geq & \int_{\lambda \leq|z-x| \leq \bar{\lambda}+1} K(x, \lambda ; y, z)\left[u^{p}(z)-u_{x, \lambda}^{p}(z)\right] d z \\
		& +\int_{\bar{\lambda}+2 \leq|z-x| \leq \bar{\lambda}+3} K(x, \lambda ; y, z)\left[u^{p}(z)-u_{x, \lambda}^{p}(z)\right]  d z \\
		\geq & \int_{\lambda \leq|z-x| \leq \bar{\lambda}+\varepsilon} K(x, \lambda ; y, z)\left[u^{p}(z)-u_{x, \lambda}^{p}(z)\right] d z \\
		& +\int_{\bar{\lambda}+\varepsilon<|z-x| \leq \bar{\lambda}+1} K(x, \lambda ; y, z)\left[u_{x, \bar{\lambda}}^{p}(z)-u_{x, \lambda}^{p}(z)\right] d z \\
		& +\int_{\bar{\lambda}+2 \leq|z-x| \leq \bar{\lambda}+3} K(x, \lambda ; y, z)\left[u^{p}(z)-u_{x, \lambda}^{p}(z)\right] d z.
	\end{aligned}
\end{equation}
It is worth mentioning that there exists $C>0$ independent of $\varepsilon$ such that for any $\lambda\in\left[\bar{\lambda},\bar{\lambda}+\varepsilon \right) $,
\begin{equation*}
	\left|u^{p}(z)-u_{x, \lambda}^{p}(z)\right| \leq C\left(|z-x|-\lambda \right) , \quad \forall \  \lambda \leq|z-x| \leq \bar{\lambda}+\varepsilon ,\ z \in \mathbb{R}^N \setminus Z,
\end{equation*}
and
\begin{equation*}
	\left|u_{x, \bar{\lambda}}^{p}(z)-u_{x, \lambda}^{p}(z)\right| \leq C(\lambda-\bar{\lambda}) \leq C \varepsilon, \quad \forall \ \lambda \leq|z-x| \leq \bar{\lambda}+1 ,\ z \in \mathbb{R}^N \setminus Z,
\end{equation*}
since $\|u\|_{C^{1}(B_{|x|}(x))}\leq C$. Moreover, it follows by \eqref{ui3} that there exists $\delta_{2}>0$ such that for $\lambda\in\left[\bar{\lambda},\bar{\lambda}+\varepsilon \right) $,
\begin{equation*}
u^{p}(z)-u_{x, \lambda}^{p}(z) \geq \delta_{2}, \quad \forall \ \bar{\lambda} +2\leq \lambda \leq|z-x| \leq \bar{\lambda}+3 ,\ z \in \mathbb{R}^N \setminus Z.
\end{equation*}
Hence, using above three estimates, by \eqref{ui4}, we have
\begin{equation}\label{ui5}
	\begin{aligned}
		u(y)-u_{x, \lambda}(y)
		\geq& -C\int_{\lambda \leq|z-x| \leq \bar{\lambda}+\varepsilon} K(x, \lambda ; y, z) \left(|z-x|-\lambda \right) d z
		-C\varepsilon\int_{\bar{\lambda}+\varepsilon<|z-x| \leq \bar{\lambda}+1} K(x, \lambda ; y, z) d z\\
		&+\delta_{2}\int_{\bar{\lambda}+2 \leq|z-x| \leq \bar{\lambda}+3} K(x, \lambda ; y, z) d z \\
		=&  -C\int_{\lambda \leq|z| \leq \bar{\lambda}+\varepsilon} K(0, \lambda ; y-x, z)\left(|z|-\lambda \right) d z
		-C\varepsilon\int_{\bar{\lambda}+\varepsilon<|z| \leq \bar{\lambda}+1} K(0, \lambda ; y-x, z) d z\\
		&+\delta_{2}\int_{\bar{\lambda}+2 \leq|z| \leq \bar{\lambda}+3} K(0, \lambda ; y-x, z) d z .
	\end{aligned}
\end{equation}
 Notice that $ K(0, \lambda ; y-x, z)=0$ for $|y-x|=\lambda$ and
\begin{equation*}
	(y-x)\cdot\nabla_{y}K(0, \lambda ; y-x, z)=(N-4)|y-x-z|^{2-N}(|z|^{2}-|y-x|^{2})>0
\end{equation*}
for $|y-x|=\lambda$, $|z|\geq\bar{\lambda}+2$.
By virtue of the positive and smoothness of $ K(0, \lambda ; y-x, z)$, for $ \bar{\lambda}\leq \lambda\leq|y-x| < \bar{\lambda}+1 $ and $\bar{\lambda}+2<|z| \leq M<\infty$, there holds
\begin{equation}\label{kx1}
	\frac{\delta_{3}(|y-x|-\lambda)}{|y-x-z|^{N-4}}\leq K(0, \lambda ; y-x, z)\leq 	\frac{\delta_{4}(|y-x|-\lambda)}{|y-x-z|^{N-4}},
\end{equation}
where $M$ and $0<\delta_{3}\leq\delta_{4}<\infty$ are positive constants. In addition, if $M$ is large enough, for $ \lambda\leq|y-x| < \bar{\lambda}+1 $ and $|z| \geq M$, we obtain
\begin{equation*}
	0\leq c_{3}\leq (y-x)\cdot\nabla_{y}\left( |y-x-z|^{N-4}K(0, \lambda ; y-x, z)\right) \leq c_{4}<\infty.
\end{equation*}
Thus, \eqref{kx1} also holds for $ \lambda\leq|y-x| < \bar{\lambda}+1 $ and $|z| \geq M$.
Using \eqref{kx1}, for $ \lambda\leq |y-x| < \bar{\lambda}+1 $, we deduce that
\begin{equation*}
	\begin{aligned}
	\int_{\bar{\lambda}+2 \leq|z| \leq \bar{\lambda}+3} K(0, \lambda ; y-x, z) d z \geq\delta_{3}(|y-x|-\lambda)\int_{\bar{\lambda}+2 \leq|z| \leq \bar{\lambda}+3} 	\frac{1}{|y-x-z|^{N-4}} d z\geq C(|y-x|-\lambda).
	\end{aligned}
\end{equation*}
Following the estimates of the integral of $K$ in the proof of Proposition 3.2 in \cite{JX}, for $ \lambda\leq |y-x| < \bar{\lambda}+1 $, there holds
\begin{equation*}
	\begin{aligned}
	 \int_{\lambda \leq|z| \leq \bar{\lambda}+\varepsilon} K(0, \lambda ; y-x, z)\left(|z|-\lambda \right) d z
	 	 \leq&~\left|\int_{\lambda \leq|z| \leq \lambda+\varepsilon}\left(\frac{|z|-\lambda}{|y-x-z|^{N-4}}-\frac{|z|-\lambda}{\left|(y-x)^{0, \lambda}-z\right|^{N-4}}\right) d z\right| \\
	 	& ~+\varepsilon \int_{\lambda \leq|z| \leq \lambda+\varepsilon}\left|\left(\frac{\lambda}{|y-x|}\right)^{N-4}-1\right| \frac{1}{\left|(y-x)^{0, \lambda}-z\right|^{N-4}} d z \\
	 	 \leq&~ C(|y-x|-\lambda) \varepsilon^{\frac{4}{N}}+C \varepsilon(|y-x|-\lambda) \\
	 	 \leq& ~C(|y-x|-\lambda) \varepsilon^{\frac{4}{N}},
	 \end{aligned}
\end{equation*}
and
\begin{equation*}
		\begin{aligned}
\int_{\bar{\lambda}+\varepsilon<|z| \leq \bar{\lambda}+1} K(0, \lambda ; y-x, z) d z
	 \leq&~\left|\int_{\bar{\lambda}+\varepsilon<|z| \leq \bar{\lambda}+1}\left(\frac{1}{|y-x-z|^{N-4}}-\frac{1}{\left|(y-x)^{0, \lambda}-z\right|^{N-4}}\right) d z\right| \\
&~ +\int_{\bar{\lambda}+\varepsilon<|z| \leq \bar{\lambda}+1}\left|\left(\frac{\lambda}{|y-x|}\right)^{N-4}-1\right| \frac{1}{\left|(y-x)^{0, \lambda}-z\right|^{N-4}} d z \\
\leq &~C(\varepsilon^{3}+|\ln\varepsilon|+1)(|y-x|-\lambda) .
\end{aligned}
\end{equation*}
Consequently, inserting these three integral of $K$ into \eqref{ui5},  for $ \lambda\leq |y-x| < \bar{\lambda}+1 $, we yield
\begin{equation}\label{ui6}
		u(y)-u_{x, \lambda}(y)\geq C(|y-x|-\lambda)\left( \delta_{2}-\varepsilon^{\frac{4}{N}}-\varepsilon(\varepsilon^{3}+|\ln\varepsilon|+1)\right) \geq 0,
\end{equation}
if $\varepsilon$ is sufficiently small. Combining \eqref{ui3} and \eqref{ui6}, our claim is proved, which contradicts to the definition of $\bar{\lambda}$ provided $\bar{\lambda}<|x|$. Thus, we have $\bar{\lambda}=|x|$, which implies for every $x=(x_{1},0)\in \R^N$ with $x_{1}\neq0$,
\begin{equation}\label{uxx}
	u_{x, \lambda}(y) \leq u(y), \ \forall \ | y-x| \geq \lambda,\ y \in \mathbb{R}^N \setminus Z,\quad \forall \ 0<\lambda<|x|.
\end{equation}
\textit{Step 3.} We will show that $u$ is symmetric with respect to the  hyperplane $ \left\lbrace x_{1}=0\right\rbrace $.
Without loss of generality, let  $x=(x_{1},0)\in \R^N$ with $x_{1}>0$. For any $0<a<x_1$, take $\lambda=x_{1}-a$. For any $y=(y_{1},w)\in\R^N\setminus Z$ with $y_{1}\leq a$ and $w\in\R^{N-1}$, then we have $|y-x|\geq\lambda$. Since $\frac{\lambda^{2}}{|y-x|^2}=\frac{(x_{1}-a)^{2}}{w^2+(x_{1}-y_{1})^{2}}\rightarrow1$ as $x_1\rightarrow \infty$, we obtain that
\begin{equation*}
\begin{aligned}
	y^{x,\lambda}=x+\frac{\lambda^{2}(y-x)}{|y-x|^{2}} &=\left(\frac{\lambda^{2}y_1}{|y-x|^{2}}+ \frac{\left(|y-x|^2-\lambda^2\right) x_{1}}{|y-x|^2},\frac{\lambda^{2}w}{|y-x|^{2}}\right)\\
&=\left(\frac{\lambda^{2}y_1}{|y-x|^{2}}+ \frac{\left(|y|^2+2x_{1}(a-y_{1})-a^2\right) x_{1}}{|y-x|^2},\frac{\lambda^{2}w}{|y-x|^{2}}\right)\\
&\rightarrow \left(2a-y_{1},w\right)
\end{aligned}
\end{equation*}
as $x_1\rightarrow \infty$.
Therefore, letting $x_{1}\rightarrow\infty$, it follows by \eqref{uxx} that for any $a>0$, there holds
\begin{equation*}
	u(y_{1},w)\geq u(2a-y_{1},w), \forall \ y_{1}\leq a.
\end{equation*}
Similarly, we have
\begin{equation*}
	u(y_{1},w)\geq u(2a-y_{1},w), \forall \ y_{1}\geq a, \ \forall\ a<0,
\end{equation*}
from which we conclude that $u$ is symmetric with respect to the  hyperplane $ \left\lbrace x_{1}=0\right\rbrace $. Moreover, $u$ is decreasing in the $x_{1}$-direction in $\left\lbrace x_{1}>0 \right\rbrace$. The proof is complete.

\end{document}